\documentclass[12pt]{amsart}

\usepackage{amsmath,amssymb,amsthm,mathtools}
\usepackage{enumitem,booktabs,microtype}
\usepackage[hidelinks]{hyperref}

\newtheorem{theorem}{Theorem}[section]
\newtheorem{proposition}[theorem]{Proposition}
\newtheorem{lemma}[theorem]{Lemma}
\newtheorem{corollary}[theorem]{Corollary}
\theoremstyle{definition}
\newtheorem{definition}[theorem]{Definition}
\theoremstyle{remark}
\newtheorem{remark}[theorem]{Remark}
\newtheorem{example}[theorem]{Example}

\newcommand{\R}{\mathbb R}
\newcommand{\Q}{\mathbb Q}
\newcommand{\cH}{\mathcal H}

\title[Newton Support Functions and Metric Completion]{Newton Support Functions and Metric Completion of Singular Conformal Metrics at Corners}

\author{Muhamad Fahmi bin Zanal Abidin}
\address{Independent Researcher, Kuala Lumpur, Malaysia}
\email{fahmiemail123@gmail.com}

\subjclass[2020]{53C23, 53C21, 14M25, 51F30}
\keywords{singular conformal metrics, metric completion, Newton support function, manifolds with corners, weighted blow-up, snowflake metric}

\date{}

\begin{document}

\begin{abstract}
We study singular conformal metrics \(g=Fg_0\) near codimension-two corners,
where \(F\) is a finite positive sum of monomial singularities.  The competing
orders are encoded by the homogeneous Newton support function
\(\mathcal H(p,q)=\max_j(pa_j+qb_j)\).  We prove the sharp projective
accessibility criterion \(\mathcal H(p,q)<2\min\{p,q\}\), and deduce that the
corner lies at finite metric distance exactly when
\(\max_j(a_j+b_j)<2\).  At every accessible corner, all normal approaches
over a fixed corner point determine one canonical completion point.  The
induced boundary metric is locally bi-Lipschitz to a snowflake of the smooth
corner metric, yielding an explicit Hausdorff-dimension formula.  For rational
exponent data, the Newton decomposition is also realized by compatible rooted
and monoidal modifications.  The metric conclusions themselves require only
positivity, bounded coefficients, and uniform ellipticity.
\end{abstract}

\maketitle

\noindent
\textit{ORCID:} \href{https://orcid.org/0009-0009-1024-9858}{0009-0009-1024-9858}

\section{Introduction}

Metric completion of a singular Riemannian metric is governed by the cost of
approaching the singular set.  Near a corner, this cost can reflect several
competing asymptotic orders rather than a single radial exponent.  The basic
problem considered here is to identify a finite invariant that records this
competition and at the same time determines the resulting completion
geometry.

Newton polyhedra provide a classical way to organize competing monomial
orders in singular asymptotics; Varchenko's work on oscillatory integrals is a
standard example \cite{Varchenko1976}, while standard toric references record
the corresponding convex-geometric and fan constructions
\cite{Fulton1993,CoxLittleSchenck2011}.  Weighted and generalized blow-up
constructions provide a complementary geometric language for resolving
corners.  In particular, Kottke--Melrose encode generalized boundary blow-up
by refinements of monoidal data \cite{KottkeMelrose2015}, Kottke extends the
refinement/blow-up correspondence to generalized corners \cite{Kottke2018},
and Joyce develops the underlying generalized-corner category
\cite{Joyce2016}.

The broader geometric-analytic setting includes singular Riemannian spaces
\cite{Cheeger1983}, edge-degenerate geometry \cite{Mazzeo1991}, and analysis
on manifolds with corners \cite{Melrose1992}.  From a length-metric viewpoint,
Romney's conformal Grushin spaces give a closely related class of singular
conformal deformations \cite{Romney2016}.  General conformal-density theory
studies boundary size and accessibility for deformed Euclidean domains
\cite{BonkKoskela2002,Nieminen2009,KlenSuomala2013}, while more recent work on
conformally singular boundary metrics studies spectral and
Hausdorff-dimensional phenomena for single-boundary degeneracies
\cite{ColinDeVerdiereDietzeDeHoopTrelat2026}.  These works provide the
surrounding singular-metric context.  The present metric statements are
proved directly from curve-length estimates and comparison arguments, and
address the additional competition created by finitely many monomial corner
interactions.

Let \(C=\{x=y=0\}\) be a codimension-two corner and consider
\begin{equation}\label{eq:model}
 g=F\,g_0,\qquad
 F(x,y,z)=\sum_{j=1}^N u_j(x,y,z)x^{-a_j}y^{-b_j},
\end{equation}
where \(a_j,b_j\ge0\), the coefficients \(u_j\) are positive and bounded
above and below, and \(g_0\) is uniformly elliptic and smooth.  The exponent
set determines the homogeneous Newton support function
\begin{equation}\label{eq:support}
 \cH(p,q)=\max_{1\le j\le N}(pa_j+qb_j),\qquad p,q>0.
\end{equation}
The homogeneity of \(\cH\) makes
\(\cH(p,q)/\min\{p,q\}\) a function of the projective weighted direction.
The central point of the paper is that this support function is also the
relevant metric invariant.

The first main result gives the sharp directional criterion
\begin{equation}\label{eq:intro-direction}
 [(p,q)]\ \text{is accessible}
 \quad\Longleftrightarrow\quad
 \cH(p,q)<2\min\{p,q\}.
\end{equation}
For nonnegative exponent data, minimizing the normalized support over
positive directions occurs at the diagonal, and therefore the original
corner is accessible exactly when
\[
 A:=\cH(1,1)=\max_j(a_j+b_j)<2.
\]
The same diagonal order governs the geometry that survives in the metric
completion.  We prove that the full normal fibre over each accessible corner
point collapses to a single canonical completion point; if \(d_C\) is a
smooth metric on the corner stratum, then locally
\[
 d_{\partial}\asymp d_C^{\,1-A/2},
\]
and the corresponding Hausdorff dimension is
\(\dim C/(1-A/2)\).  Thus the Newton support function controls both approach
to the corner and the intrinsic geometry of the completed stratum.

A second theme is the distinction between metric and resolution hypotheses.
The metric theory allows arbitrary real exponents and bounded positive
coefficients.  Smooth coefficients are needed only when one asks for smooth
residual units after resolution, while rationality is needed only for the
rooted integral/monoidal realization.  The mixed two-monomial calculation
also shows why scalar monomialization alone is insufficient: the differential
of the weighted blow-down carries part of the metric scaling and must be
tracked simultaneously.

The results are deliberately limited to finite positive Newton support in
codimension two.  Positivity prevents cancellation between leading face
terms, while finiteness makes the accessibility set a finite projective fan.
Signed interactions, higher codimension, and arbitrary nonpolyhomogeneous
singular factors require additional ideas and are not claimed here.

The author's related work \cite{ZanalAbidinPaper1} treats the
single-interaction and orthogonal-support special cases of this framework.
The present paper is self-contained and extends those models to arbitrary
finite positive Newton supports, producing the projective accessibility
criterion, the complete accessibility fan, and the corresponding completion
and boundary-snowflake classification.

\paragraph{Organization.}
Section~\ref{sec:local-comparison} establishes the local comparison estimates.
Section~\ref{sec:newton-directions} develops the Newton support function and
proves the directional accessibility criterion.
Section~\ref{sec:mixed-benchmark} analyzes the mixed two-monomial model.
Sections~\ref{sec:corner-accessibility}--\ref{sec:boundary-metric} derive
corner accessibility, the finite accessibility fan, the canonical corner
embedding, and the boundary snowflake law.
Sections~\ref{sec:rational-resolution} and~\ref{sec:globalization} treat
rational geometric realization and conditional globalization.
Section~\ref{sec:sharpness} isolates the roles of the hypotheses.
The main theorem is collected in Section~\ref{sec:main-theorem}, followed by
the comparison with existing work in Section~\ref{sec:related-work} and the
discussion of limitations in Section~\ref{sec:limitations}.

\section{Local setting and elementary comparison}\label{sec:local-comparison}

Let
\[
U=[0,\varepsilon)^2\times V
\]
be an adapted coordinate neighborhood, where \(V\) is a relatively compact
coordinate patch in the corner stratum.  Write \(z\) for the tangential
variables.

\begin{definition}[Positive finite Newton interaction]
A function \(F\) on \(U^\circ\) is a positive finite Newton interaction if
\[
F=\sum_{j=1}^N u_jx^{-a_j}y^{-b_j},
\]
where \(N<\infty\), \(a_j,b_j\in\R_{\ge0}\), and
\[
0<c_j\le u_j\le C_j<\infty
\]
on \(U^\circ\).
\end{definition}

\begin{lemma}[Frozen-support comparison]\label{lem:frozen}
Set
\[
F_-=\sum_jc_jx^{-a_j}y^{-b_j},
\qquad
F_+=\sum_jC_jx^{-a_j}y^{-b_j}.
\]
Then
\[
F_-\le F\le F_+.
\]
If
\[
c_0g_{\rm E}\le g_0\le C_0g_{\rm E}
\]
on \(U\), then
\[
c_0F_-g_{\rm E}\le Fg_0\le C_0F_+g_{\rm E}.
\]
\end{lemma}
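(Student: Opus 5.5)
The plan is a termwise comparison followed by multiplication of quadratic-form inequalities by a positive scalar. Work pointwise on \(U^\circ\), where \(x,y>0\). There the monomials \(x^{-a_j}y^{-b_j}\) are well defined and strictly positive for every real \(a_j,b_j\ge0\).

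First I will multiply the coefficient bounds \(c_j\le u_j\le C_j\) by the positive number \(x^{-a_j}y^{-b_j}\). This gives
\[
c_jx^{-a_j}y^{-b_j}\le u_jx^{-a_j}y^{-b_j}\le C_jx^{-a_j}y^{-b_j}
\]
for each \(j\). Summing over the finitely many indices \(j=1,\dots,N\) then yields \(F_-\le F\le F_+\). No convergence issue arises because \(N<\infty\), and all three functions are strictly positive on \(U^\circ\).

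For the metric statement I will read \(c_0g_{\rm E}\le g_0\le C_0g_{\rm E}\) as an inequality of quadratic forms. That is, \(c_0|v|^2\le g_0(v,v)\le C_0|v|^2\) for every tangent vector \(v\) at every point of \(U\). At a point of \(U^\circ\), multiply this by the positive scalar \(F\), which preserves the inequalities. This gives
\[
c_0F|v|^2\le Fg_0(v,v)\le C_0F|v|^2.
\]
Next apply the first part: \(F\ge F_-\) together with \(c_0|v|^2\ge0\) gives \(c_0F|v|^2\ge c_0F_-|v|^2\). Similarly, \(F\le F_+\) together with \(C_0|v|^2\ge0\) gives \(C_0F|v|^2\le C_0F_+|v|^2\). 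Chaining these produces \(c_0F_-g_{\rm E}\le Fg_0\le C_0F_+g_{\rm E}\) as quadratic forms on \(U^\circ\).

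There is no real obstacle here. The only point needing care is that every scalar multiplier used is nonnegative, namely the monomials, \(F\), \(c_0\), and \(|v|^2\), so that multiplying never reverses an inequality. This is exactly why the statement is restricted to the open region \(U^\circ\), where the monomials are finite and positive.
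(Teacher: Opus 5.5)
Your proof is correct and follows the same route as the paper: multiply $c_j\le u_j\le C_j$ termwise by the positive monomials and sum, then combine the scalar comparison with the quadratic-form bounds on $g_0$ using nonnegative multipliers. The paper states this in two lines; your version simply makes explicit the sign checks that it leaves implicit.
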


\begin{proof}
The scalar inequalities hold term by term.  Multiplication by a positive
quadratic form preserves order.
\end{proof}

\begin{lemma}[Stay-versus-exit localization]\label{lem:exit}
Let \(W\Subset U\).  Under the hypotheses above there is
\(\eta(W,U)>0\) such that every rectifiable curve starting in \(W\) and
leaving \(U\) has \(g\)-length at least \(\eta\).
\end{lemma}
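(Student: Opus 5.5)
The plan is to reduce the \(g\)-length estimate to a Euclidean length estimate, using a pointwise lower bound on the conformal factor. By Lemma~\ref{lem:frozen}, \(g=Fg_0\ge c_0F_-g_{\rm E}\). Since \(a_j,b_j\ge0\) and \(0\le x,y<\varepsilon\), each monomial satisfies
\[
x^{-a_j}y^{-b_j}\ge \varepsilon_1^{-a_j-b_j}>0
\]
on \(U^\circ\), where \(\varepsilon_1=\max\{\varepsilon,1\}\). Indeed, if \(\varepsilon\le1\) every monomial is at least \(1\), and in general it is at least \(\varepsilon^{-(a_j+b_j)}\). Hence
\[
F_-\ge m:=\sum_j c_j\varepsilon_1^{-(a_j+b_j)}>0
\]
on \(U^\circ\). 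It follows that \(g\ge c_0 m\,g_{\rm E}\) on the interior, so the \(g\)-length of any rectifiable curve in \(U^\circ\) is at least \(\sqrt{c_0m}\) times its Euclidean length.

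The second step is Euclidean geometry. Since \(W\Subset U\), the closure of \(W\) is compact in \(U\), so
\[
\delta:=\operatorname{dist}_{\rm E}(W,\partial_{\rm rel}U)>0.
\]
Here \(\partial_{\rm rel}U\) means the part of the topological frontier of \(U\) in the ambient chart, namely the faces \(\{x=\varepsilon\}\), \(\{y=\varepsilon\}\) and \(\partial V\), but not the corner faces \(\{x=0\}\) and \(\{y=0\}\), which belong to \(U\). A curve starting in \(W\) and leaving \(U\) must, by connectedness, meet this relative frontier. Its initial segment, up to the first exit time, therefore has Euclidean length at least \(\delta\). This gives the bound with
\[
\eta=\sqrt{c_0m}\,\delta.
\]

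The main obstacle is that the curve may touch the singular faces \(\{x=0\}\) or \(\{y=0\}\), where \(F\) is not defined. I would handle this as follows. Curves are understood in \(U^\circ\), or as limits of such curves, with \(g\)-length defined via the interior. Pieces lying on the faces either have infinite length, or are treated by the convention that length is computed on the portion in \(U^\circ\). If that convention is problematic, I would instead use that the relevant curves lie in the manifold \(U^\circ\) and cannot traverse the faces at all. In that case the curve lives in \(U^\circ\) up to the exit time, and the Euclidean distance from \(W\cap U^\circ\) to \(\{x=\varepsilon\}\cup\{y=\varepsilon\}\cup(\R^2\times\partial V)\) is still at least \(\delta\). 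Either way the uniform lower bound \(F\ge m\) makes the singular faces harmless, since they only increase length. The constant \(\eta\) depends only on \(W\), \(U\), \(c_0\) and the data \((c_j,a_j,b_j)\).
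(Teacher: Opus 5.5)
Your proposal is correct and follows the paper's argument: nonnegative exponents on the bounded box give a uniform positive lower bound on $F$, hence $g\ge c\,g_{\rm E}$, and exiting $U$ then costs a fixed positive Euclidean distance from $\overline W$. Restricting to the relative frontier $\{x=\varepsilon\}\cup\{y=\varepsilon\}\cup\partial V$ is actually more careful than the paper's phrase ``distance from $\overline W$ to $U^c$.'' That distance would be zero whenever $\overline W$ meets the faces $\{x=0\}$ or $\{y=0\}$, which happens in the later applications.
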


\begin{proof}
Because the exponents are nonnegative and the box is bounded, \(F\) has a
positive lower bound on \(U^\circ\).  Uniform positive definiteness gives
\(g\ge c\,g_{\rm E}\).  The Euclidean distance from \(\overline W\) to
\(U^c\) is positive, so every exiting curve has a fixed positive Euclidean,
and hence \(g\)-, length.
\end{proof}

Lemma~\ref{lem:exit} is the localization step needed to transfer restricted
chart estimates to the ambient completion metric: sufficiently small local
boundary distances cannot be shortened by leaving the chart.

\subsection*{Completion convention and directional accessibility}
Let \((\widehat{U^\circ},\widehat d_g)\) denote the metric completion of
\((U^\circ,d_g)\), realized as equivalence classes of \(d_g\)-Cauchy
sequences.  A projective weighted direction \([(p,q)]\), with \(p,q>0\),
is called \emph{directionally accessible} if, after normalizing
\(m=q/p\), there exists a finite-length curve approaching \(r=0\) in the
weighted chart
\[
 x=r,\qquad y=r^m s,
\]
which eventually remains in a compact interval
\(0<s_-\le s\le s_+<\infty\).  This sector-local notion is distinct from
the classification of the full completion fibre over a background corner
point, which is established in Section~\ref{sec:corner-fibre}.

\section{Newton support and weighted directions}\label{sec:newton-directions}

For \(m>0\), define
\[
H(m)=\max_j(a_j+mb_j).
\]
Equivalently,
\[
H(m)=\cH(1,m).
\]

\begin{proposition}[Power-law support]\label{prop:power}
Along \(y=x^ms\), with \(s\) restricted to a compact interval
\(0<s_-\le s\le s_+<\infty\),
\[
F(x,x^ms,z)\asymp x^{-H(m)}
\]
uniformly in \(s\) and \(z\).
\end{proposition}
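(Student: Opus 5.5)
The plan is to apply the frozen-support comparison of Lemma~\ref{lem:frozen} first, which removes the coefficients. Since \(F_-\le F\le F_+\) with \(F_\pm\) having constant coefficients \(c_j,C_j\) independent of \(z\), it suffices to prove two-sided bounds for the frozen sums
\[
\sum_j c_j x^{-a_j}(x^m s)^{-b_j}=\sum_j c_j s^{-b_j}\,x^{-(a_j+mb_j)},
\]
and similarly with \(C_j\). Uniformity in \(z\) is then automatic. The only remaining parameters are \(x\in(0,\varepsilon)\) and \(s\in[s_-,s_+]\).

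Next I will bound the \(s\)-factors. Because \(b_j\ge0\) and \(s\) lies in a compact subinterval of \((0,\infty)\), each factor \(s^{-b_j}\) lies between \(\min\{s_-^{-b_j},s_+^{-b_j}\}\) and \(\max\{s_-^{-b_j},s_+^{-b_j}\}\). These are positive constants depending only on \(s_\pm\) and the exponent data. So up to constants \(c',C'>0\), both frozen sums are comparable to
\[
S(x)=\sum_{j=1}^N x^{-(a_j+mb_j)}.
\]

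The final step compares \(S(x)\) with its largest term. Take \(x<\varepsilon\le 1\); if \(\varepsilon>1\), shrink the chart, or absorb the bounded range \(x\in[1,\varepsilon)\) into the constants, since all terms there are bounded above and below by positive constants. For \(0<x\le1\) the map \(e\mapsto x^{-e}\) is nondecreasing, so the largest term is \(x^{-H(m)}\), attained at any index achieving \(H(m)=\max_j(a_j+mb_j)\). This gives
\[
x^{-H(m)}\le S(x)\le N\,x^{-H(m)}.
\]
Combining the three comparisons yields \(F(x,x^ms,z)\asymp x^{-H(m)}\) with constants depending only on \(N\), the \(c_j,C_j\), \(s_\pm\), and the exponents.

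There is no real obstacle here. The one point needing care is that the comparison with the maximal term uses \(x\le1\), which determines which exponent dominates. This is why the statement is local near \(x=0\) and the constants are allowed to depend on the chart size. Nonnegativity of the \(b_j\) is not even essential, since compactness of \([s_-,s_+]\) alone bounds \(s^{-b_j}\) for any real \(b_j\).
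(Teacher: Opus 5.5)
Your proposal is correct and follows essentially the same route as the paper: a term attaining $H(m)$ gives the lower bound, and the two-sided bounds on $u_j$ and $s^{-b_j}$ together with $a_j+mb_j\le H(m)$ give the upper bound. Your explicit remark that the dominance comparison uses $x\le 1$ (or shrinking the chart) is a detail the paper leaves implicit, and it is handled correctly.
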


\begin{proof}
Choose \(j_*\) with \(a_{j_*}+mb_{j_*}=H(m)\).  The corresponding positive
term gives the lower bound.  Every other exponent satisfies
\(a_j+mb_j\le H(m)\), and the boundedness of \(s^{-b_j}\) and \(u_j\)
gives the upper bound.
\end{proof}

The weighted substitution
\begin{equation}\label{eq:weighted}
x=r,\qquad y=r^ms
\end{equation}
has the exact differential
\begin{equation}\label{eq:dyexact}
dy=mr^{m-1}s\,dr+r^m\,ds.
\end{equation}
For the Euclidean normal metric,
\begin{equation}\label{eq:exact-coframe}
dx^2+dy^2
=
dr^2+\bigl(mr^{m-1}s\,dr+r^m ds\bigr)^2.
\end{equation}
The second square is exactly \(dy^2\); it must not be discarded merely
because one coefficient appears lower order in one chart.

\begin{theorem}[Newton accessibility theorem]\label{thm:direction}
Let \(m>0\), and restrict to a compact interior angular set
\(s\in[s_-,s_+]\).  The direction is directionally accessible if and only if
\begin{equation}\label{eq:direction-criterion}
H(m)<2\min\{1,m\}.
\end{equation}
Equivalently, for a homogeneous direction \([(p,q)]\),
\begin{equation}\label{eq:homogeneous-criterion}
\boxed{\cH(p,q)<2\min\{p,q\}.}
\end{equation}
Equality is inaccessible.
\end{theorem}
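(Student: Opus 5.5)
The approach is to reduce everything, via Lemma~\ref{lem:frozen} and Proposition~\ref{prop:power}, to a two-sided comparison of the metric on the sector $\{s\in[s_-,s_+]\}$ with an explicit model, and then to compute lengths in the weighted chart \eqref{eq:weighted}. Uniform ellipticity of $g_0$ and Lemma~\ref{lem:frozen} let us replace $g$ by $F\,(dx^2+dy^2+dz^2)$ up to constants. Proposition~\ref{prop:power} then gives $F\asymp r^{-H(m)}$ uniformly on the sector. Using \eqref{eq:exact-coframe}, the normal part of the model metric is
\[
r^{-H(m)}\Bigl(dr^2+\bigl(mr^{m-1}s\,dr+r^m ds\bigr)^2\Bigr).
\]

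\emph{Sufficiency.} Use the explicit curve $s=s_0$ fixed, $z$ fixed, $r\in(0,r_0]$. Along it $dx^2+dy^2=(1+m^2s_0^2r^{2m-2})\,dr^2$. Its length is therefore comparable to
\[
\int_0^{r_0} r^{-H(m)/2}\bigl(1+r^{m-1}\bigr)\,dr \asymp \int_0^{r_0} r^{-H(m)/2}\,r^{\min\{0,m-1\}}\,dr .
\]
This integral is finite exactly when $H(m)/2-\min\{0,m-1\}<1$, that is, $H(m)<2\min\{1,m\}$. So under \eqref{eq:direction-criterion} this straight weighted ray has finite length, which proves directional accessibility.

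\emph{Necessity.} Take any rectifiable curve $\gamma$ with $r\to0$ that eventually stays in the sector. We bound its length from below by the cost of $r$ alone. Because $dx=dr$, we have $\sqrt{dx^2+dy^2}\ge|dr|$, so
\[
L(\gamma)\gtrsim \int r^{-H(m)/2}|dr|=\infty \quad\text{whenever } H(m)\ge 2 .
\]
This settles the case $m\ge1$.

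For $m<1$ we need the $y$-coordinate instead. On the sector $y\asymp r^m$, so $r\asymp y^{1/m}$ and $F\asymp y^{-H(m)/m}$. Using $\sqrt{dx^2+dy^2}\ge|dy|$ and the fact that $y\to0$ along $\gamma$, we get
\[
L(\gamma)\gtrsim\int y^{-H(m)/(2m)}\,|dy|,
\]
which diverges when $H(m)\ge 2m$. Together the two cases give divergence whenever $H(m)\ge2\min\{1,m\}$, including the equality case, where the exponent is exactly $-1$ and the integral diverges logarithmically.

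The point to handle carefully is this lower bound. We must use whichever of $x$ or $y$ decays more slowly along the sector, rather than $dr$ alone, because for $m<1$ the $dy$ term dominates. This is exactly why \eqref{eq:exact-coframe} must not be truncated. We must also justify that the total variation of $y$ (or $r$) along $\gamma$ bounds the integral, even though $\gamma$ may oscillate in $s$ and $z$. This follows from $\int \phi(y)\,|dy|\ge\bigl|\Phi(y(t_1))-\Phi(y(t_0))\bigr|$, where $\Phi$ is an antiderivative of $\phi$, applied along the eventual segment of $\gamma$ inside the sector.

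Finally, the homogeneous form \eqref{eq:homogeneous-criterion} follows from $\cH(p,q)=p\,H(q/p)$ and $\min\{p,q\}=p\min\{1,q/p\}$, with $m=q/p$.
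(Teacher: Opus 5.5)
Your proposal is correct and follows the paper's proof. Sufficiency uses the same constant-$s$ weighted ray, split into the regimes $m\ge1$ and $0<m<1$. Necessity uses the same two barriers $g\gtrsim r^{-H(m)}\,dr^2$ and $g\gtrsim r^{-H(m)}\,dy^2\asymp y^{-H(m)/m}\,dy^2$ on the sector, combined with a level-crossing (antiderivative) estimate, and the equality case again diverges logarithmically; the only cosmetic difference is that the paper applies both barriers for every $m$ and takes the union of the divergence conditions, whereas you apply in each regime the barrier for the more slowly decaying coordinate, which gives the same criterion.
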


\begin{proof}
By Lemma~\ref{lem:frozen} it suffices to work with the Euclidean background
and a frozen positive Newton sum.  Proposition~\ref{prop:power} gives
\(F\asymp r^{-H(m)}\).

For sufficiency, take \(s=s_0\) constant.  From
\eqref{eq:exact-coframe},
\[
|\dot\gamma|_g^2
\asymp
r^{-H(m)}
\left(1+m^2s_0^2r^{2m-2}\right)\dot r^2.
\]
If \(m\ge1\), the length is bounded by a constant multiple of
\[
\int_0^\varepsilon r^{-H(m)/2}\,dr,
\]
which converges exactly when \(H(m)<2\).  If \(0<m<1\), the second term
dominates and the length is bounded by
\[
\int_0^\varepsilon r^{-H(m)/2+m-1}\,dr,
\]
which converges exactly when \(H(m)<2m\).

For necessity, \eqref{eq:exact-coframe} gives the exact lower bounds
\[
g\ge c\,r^{-H(m)}\,dr^2,
\qquad
g\ge c\,r^{-H(m)}\,dy^2.
\]
For an arbitrary rectifiable approach curve \(\gamma(t)\), no monotonicity
of \(r(t)\) is needed.  Indeed, the one-dimensional coarea (equivalently,
level-crossing) formula gives
\[
\int \sqrt{h(r(t))}\,|r'(t)|\,dt
\ge
\int_0^{r_*}\sqrt{h(s)}\,ds,
\]
because every level \(s\in(0,r_*)\) is crossed at least once by a curve
approaching \(r=0\).  Applying this with \(h(r)=c r^{-H(m)}\), the first
lower bound yields divergence when \(H(m)\ge2\).

On the compact angular set \(y=r^ms\asymp r^m\), hence
\(r^{-H(m)}\asymp y^{-H(m)/m}\).  The same level-crossing argument,
now applied to the coordinate \(y\), shows that the second lower bound
yields divergence when \(H(m)/m\ge2\), i.e. when \(H(m)\ge2m\).
Combining them gives
\eqref{eq:direction-criterion}.  At equality the divergence is logarithmic:
if \(m\ge1\) and \(H(m)=2\), then the radial lower bound contains
\[
\int_0^\varepsilon r^{-H(m)/2}\,dr
=
\int_0^\varepsilon \frac{dr}{r}
=
\infty.
\]
If \(0<m<1\) and \(H(m)=2m\), then
\[
-\frac{H(m)}2+m-1=-1,
\]
so the corresponding lower-bound integral is again
\(\int_0^\varepsilon r^{-1}\,dr=\infty\).

For \((p,q)\), normalize by \(p\): \(m=q/p\).  Homogeneity gives
\[
\cH(p,q)=pH(q/p),
\]
and multiplication of the normalized criterion by \(p\) yields
\eqref{eq:homogeneous-criterion}.
\end{proof}

\section{The mixed two-monomial benchmark}\label{sec:mixed-benchmark}

The first genuinely mixed interaction is
\begin{equation}\label{eq:mixed}
F=x^{-a_1}y^{-b_1}+x^{-a_2}y^{-b_2}.
\end{equation}
Assume, after relabeling if necessary,
\[
a_1>a_2,\qquad b_1<b_2.
\]
Then the exponent vectors are incomparable coordinatewise and there is a
unique positive balance slope
\begin{equation}\label{eq:kappa}
\kappa=\frac{a_1-a_2}{b_2-b_1}>0.
\end{equation}
Set
\begin{equation}\label{eq:lambda}
\lambda=a_1+\kappa b_1=a_2+\kappa b_2.
\end{equation}

Under
\[
x=r,\qquad y=r^\kappa s,
\]
the scalar factor resolves exactly:
\begin{equation}\label{eq:mixed-factor}
F=r^{-\lambda}\bigl(s^{-b_1}+s^{-b_2}\bigr).
\end{equation}
The quotient of the two terms is
\[
s^{\,b_2-b_1},
\]
so \(s\to0\) and \(s\to\infty\) recover the two pure dominance regimes,
while compact \(s\)-intervals describe the balance face.

\begin{theorem}[Mixed balance-face criterion]\label{thm:mixed}
On every compact interior part of the balance face, the resolved direction
is accessible if and only if
\[
\boxed{\lambda<2\min\{1,\kappa\}.}
\]
\end{theorem}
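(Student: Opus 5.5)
The plan is to reduce Theorem~\ref{thm:mixed} to the Newton accessibility theorem (Theorem~\ref{thm:direction}) with \(m=\kappa\), and to identify \(H(\kappa)\) with \(\lambda\). The identification is immediate from the definitions: for the two-monomial support, \(H(\kappa)=\max\{a_1+\kappa b_1,\,a_2+\kappa b_2\}\). By \eqref{eq:lambda} both entries equal \(\lambda\), so \(H(\kappa)=\lambda\). A compact interior part of the balance face is precisely a set \(s\in[s_-,s_+]\) with \(0<s_-\le s_+<\infty\) in the chart \(x=r\), \(y=r^\kappa s\). This is exactly the setting of Theorem~\ref{thm:direction}, and the criterion \(H(m)<2\min\{1,m\}\) then reads \(\lambda<2\min\{1,\kappa\}\).

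To keep the argument self-contained and to highlight the role of the differential, I would also verify the two directions directly from \eqref{eq:mixed-factor} and \eqref{eq:exact-coframe}.

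\emph{Sufficiency.} Take \(s=s_0\) constant. Then \(F=r^{-\lambda}(s_0^{-b_1}+s_0^{-b_2})\asymp r^{-\lambda}\), and
\[
|\dot\gamma|^2_g\asymp r^{-\lambda}\bigl(1+\kappa^2s_0^2r^{2\kappa-2}\bigr)\dot r^2 .
\]
Split according to whether \(\kappa\ge1\) or \(\kappa<1\). The resulting length integrals are \(\int_0 r^{-\lambda/2}\,dr\) and \(\int_0 r^{-\lambda/2+\kappa-1}\,dr\) respectively. They converge exactly when \(\lambda<2\) and when \(\lambda<2\kappa\).

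\emph{Necessity.} Use the two lower bounds \(g\ge c\,r^{-\lambda}dr^2\) and \(g\ge c\,r^{-\lambda}dy^2\). The second should be rewritten as \(g\ge c'\,y^{-\lambda/\kappa}dy^2\), which is valid because \(y\asymp r^\kappa\) on the compact \(s\)-range. Then apply the level-crossing argument from the proof of Theorem~\ref{thm:direction} to \(r\) and to \(y\) separately. This gives divergence when \(\lambda\ge2\) or when \(\lambda\ge2\kappa\), with logarithmic divergence at equality.

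The only point that needs care is the necessity direction for curves that wander in \(s\) within \([s_-,s_+]\) rather than following a ray. This is handled by the comparison \(F\asymp r^{-\lambda}\), which is uniform in \(s\) on the compact range and follows from \eqref{eq:mixed-factor} (or from Proposition~\ref{prop:power}). I would also explicitly retain the \(dy^2\) term rather than the \(r^{2\kappa}ds^2\) piece alone. The case \(\kappa<1\) is the one where discarding it would give the wrong threshold \(\lambda<2\) instead of \(\lambda<2\kappa\).
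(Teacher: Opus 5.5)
Your proposal is correct and follows essentially the same route as the paper: reduce to Theorem~\ref{thm:direction} at $m=\kappa$, where $H(\kappa)=\lambda$ because both affine orders coincide. The paper likewise confirms the criterion directly, using the constant-$s$ path for sufficiency and the two exact-square barriers $r^{-\lambda}\,dr^2$ and $r^{-\lambda}\,dy^2$ for necessity; your explicit identification $H(\kappa)=\lambda$ and your remark on keeping the full $dy^2$ term just spell out what the paper leaves implicit.
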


\begin{proof}
This is Theorem~\ref{thm:direction} at the balancing slope
\(m=\kappa\).  Directly, the exact pullback is
\[
r^{-\lambda}A(s)
\left[
dr^2+
\bigl(\kappa r^{\kappa-1}s\,dr+r^\kappa ds\bigr)^2
\right],
\]
where \(A(s)=s^{-b_1}+s^{-b_2}\) is bounded above and below on compact
interior angular sets.  The two exact squares yield the two necessary
barriers \(\lambda<2\) and \(\lambda<2\kappa\), while the constant-\(s\)
path proves sufficiency.
\end{proof}

This benchmark exhibits the key methodological point: scalar
monomialization alone does not determine the pulled-back metric.  The
blow-down differential carries part of the singular scaling.

\section{Global corner accessibility}\label{sec:corner-accessibility}

Set
\begin{equation}\label{eq:A}
A=\max_j(a_j+b_j)=H(1).
\end{equation}

\begin{theorem}[Corner accessibility criterion]\label{thm:corner}
The codimension-two corner \(C=\{x=y=0\}\) lies at finite distance from the
interior if and only if
\begin{equation}\label{eq:cornercrit}
\boxed{A<2.}
\end{equation}
\end{theorem}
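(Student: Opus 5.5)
Sufficiency will follow from the diagonal direction and Theorem~\ref{thm:direction}; necessity needs a lower bound valid for every approach curve, not only for curves in a fixed angular sector. By Lemma~\ref{lem:frozen} we may replace $g$ by $F_-\,g_{\rm E}$ up to constants. By Lemma~\ref{lem:exit} we may localize to a small box, since curves that leave it have length bounded below.

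For sufficiency, assume $A<2$ and take $m=1$, so that $H(1)=A<2=2\min\{1,1\}$. Theorem~\ref{thm:direction} then produces a finite-length curve $x=r$, $y=rs_0$, $z=z_0$ tending to the corner point $(0,0,z_0)$. Concretely, $F\asymp r^{-A}$ along it, and its length is comparable to $\int_0^\varepsilon r^{-A/2}\,dr<\infty$. Hence $C$ lies at finite distance.

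For necessity, assume $A\ge2$. Choose $j_*$ with $a_{j_*}+b_{j_*}=A$. Then
\[
F\ge c\,x^{-a_{j_*}}y^{-b_{j_*}}\ge c\,\rho^{-A},\qquad \rho:=\max\{x,y\},
\]
because $x,y\le\rho$ and the exponents are nonnegative. This is the key point: nonnegativity of the exponents lets the worst monomial be dominated uniformly in all directions, including near the faces $s\to0$ or $s\to\infty$ where Theorem~\ref{thm:direction} gives no information.

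The function $\rho$ is Lipschitz with $|\nabla\rho|\le1$ almost everywhere, so $g\ge c\,\rho^{-A}\,d\rho^2$ in the sense of curve lengths. Any curve ending at (or accumulating to) $C$ has $\rho\to0$. The level-crossing argument from the proof of Theorem~\ref{thm:direction}, applied to $\rho$ instead of $r$, then gives
\[
L_g(\gamma)\ge c\int_0^{\rho_*}t^{-A/2}\,dt=\infty
\]
for $A\ge2$, with the equality case $A=2$ producing the logarithmic divergence. Since every curve from a fixed interior point to $C$ has infinite length, $C$ is at infinite distance. The same bound shows that no Cauchy sequence in the interior has $\rho\to0$, so $C$ is not reached in the completion either.

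The main obstacle is exactly this uniformity. Theorem~\ref{thm:direction} only controls compact angular sets, and a curve could in principle hug a face $\{x=0\}$ or $\{y=0\}$ with $s$ unbounded. The bound via $\rho=\max\{x,y\}$ handles this, but I would check that the level-crossing inequality holds for the merely Lipschitz function $\rho$. It should, because $|(\rho\circ\gamma)'|\le|\dot\gamma|_{\rm E}$ almost everywhere, and the one-dimensional change-of-variables argument goes through unchanged.
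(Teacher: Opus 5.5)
Your proof is correct and follows the paper's argument: the diagonal path ($m=1$, constant $s$) gives sufficiency, and for necessity you pick a maximizing term $j_*$, bound $x^{-a_{j_*}}y^{-b_{j_*}}\ge\rho^{-A}$ using nonnegativity of the exponents, and conclude from a level-crossing lower bound $L_g\ge c\int\rho^{-A/2}|d\rho|$. The only difference is that the paper takes $\rho=(x^2+y^2)^{1/2}$, which is smooth away from the corner and so avoids the Lipschitz check you flag for $\max\{x,y\}$; your version works equally well, since $\max\{x,y\}$ is $1$-Lipschitz for the Euclidean metric.
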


\begin{proof}
If \(A<2\), take the diagonal path \(x=y=r\), with \(z\) fixed.
Then
\[
F(r,r,z)\asymp r^{-A},
\]
and the length is bounded by
\[
C\int_0^\varepsilon r^{-A/2}\,dr<\infty.
\]

Conversely, choose \(j_*\) with \(a_{j_*}+b_{j_*}=A\).  Positivity gives
\[
g\ge c\,x^{-a_{j_*}}y^{-b_{j_*}}(dx^2+dy^2).
\]
Let
\[
\rho=(x^2+y^2)^{1/2}.
\]
Since \(x,y\le\rho\) and \(a_{j_*},b_{j_*}\ge0\),
\[
x^{-a_{j_*}}y^{-b_{j_*}}\ge\rho^{-A}.
\]
Also \(|d\rho|\le (dx^2+dy^2)^{1/2}\).  Hence every curve approaching the
corner satisfies
\[
L_g\ge c\int \rho^{-A/2}|d\rho|.
\]
This diverges for \(A\ge2\), including logarithmically at \(A=2\).
\end{proof}

\begin{remark}
Theorem~\ref{thm:corner} is a global accessibility statement about the
corner as a completion stratum.  Theorem~\ref{thm:direction} is finer:
even when \(A<2\), some resolved weighted directions can remain at infinite
distance.
\end{remark}

\section{Finite accessibility fan}\label{sec:accessibility-fan}

Because \(N<\infty\),
\[
H(m)=\max_j(a_j+mb_j)
\]
is continuous, convex and piecewise affine with finitely many breakpoints.

\begin{theorem}[Finite accessibility fan]\label{thm:fan}
The set
\[
\mathcal F_{\mathrm{acc}}
=
\{m>0:H(m)<2\min\{1,m\}\}
\]
is an open subset of \((0,\infty)\) with finitely many connected
components.  Its endpoints occur among:
\begin{enumerate}[label=(\roman*)]
\item Newton breakpoints where two affine orders agree;
\item solutions of \(a_j+mb_j=2m\) on \(0<m\le1\);
\item solutions of \(a_j+mb_j=2\) on \(m\ge1\);
\item the normalization point \(m=1\).
\end{enumerate}
\end{theorem}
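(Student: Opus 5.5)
The plan is to write the accessibility set as the superlevel set of one continuous piecewise affine function and read off its components and endpoints. Set
\[
\Phi(m)=2\min\{1,m\}-H(m),\qquad m>0,
\]
so that \(\mathcal F_{\mathrm{acc}}=\{m>0:\Phi(m)>0\}\). Both \(H\) and \(m\mapsto2\min\{1,m\}\) are continuous on \((0,\infty)\), hence so is \(\Phi\). Openness of \(\mathcal F_{\mathrm{acc}}\) follows at once, as the preimage of \((0,\infty)\) under a continuous map. By Theorem~\ref{thm:direction}, this set coincides with the set of directionally accessible slopes, but the statement itself concerns only the inequality.

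For finiteness, we partition \((0,\infty)\) into finitely many intervals \(I_1,\dots,I_K\). The cut points are the finitely many Newton breakpoints of \(H\), where the maximizing index changes, together with \(m=1\). On each \(I_k\), \(H(m)=a_j+mb_j\) for a single index \(j\), and \(2\min\{1,m\}\) equals either \(2m\) or \(2\). Hence \(\Phi\) is affine on each \(I_k\).

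An affine function on an interval is either identically zero or vanishes at most once. In either case \(\{\Phi>0\}\cap I_k\) is a single subinterval, possibly empty, relatively open in \(I_k\). A finite union of intervals has finitely many connected components, which proves the first claim.

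For the endpoints, let \(m_0\in(0,\infty)\) be a finite endpoint of a component. Then \(m_0\notin\mathcal F_{\mathrm{acc}}\), since the set is open. On the other hand, \(\Phi>0\) at points arbitrarily close to \(m_0\) on one side, so continuity forces \(\Phi(m_0)=0\). We then consider three cases.
\begin{enumerate}[label=(\alph*)]
\item If \(m_0\) is a Newton breakpoint, we are in case (i).
\item If \(m_0=1\), we are in case (iv).
\item Otherwise \(m_0\) lies in the interior of some piece \(I_k\), where \(H(m)=a_j+mb_j\) for one fixed \(j\). The equation \(\Phi(m_0)=0\) then reads \(a_j+m_0b_j=2m_0\) when \(m_0<1\), which is case (ii), or \(a_j+m_0b_j=2\) when \(m_0>1\), which is case (iii).
\end{enumerate}

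The only delicate step is case (c), when \(\Phi\) vanishes identically on a whole piece \(I_k\). Then \(\{\Phi>0\}\cap I_k\) is empty, and any component endpoint adjacent to \(I_k\) is an endpoint of \(I_k\), hence already a breakpoint or \(m=1\). The endpoints \(0\) and \(\infty\) of \((0,\infty)\) are not points of the domain and need no classification; if one wishes to include them, one remarks that they are the boundary of the ambient interval.

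Finally, the affine pieces of \(H\) are determined by the finitely many indices \(j\), and each equation in (ii)--(iii) has at most one solution per index unless it is degenerate. The candidate endpoint set is therefore finite, which is consistent with the component count obtained above.
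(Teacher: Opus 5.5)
Your proof is correct and takes the same route as the paper. Both write \(\mathcal F_{\mathrm{acc}}\) as the strict sign set of the continuous piecewise-affine function \(2\min\{1,m\}-H(m)\), which has finitely many pieces cut at the Newton breakpoints and at \(m=1\), and then observe that every finite endpoint is a zero of an active affine equality. You make explicit the per-piece interval structure and the continuity argument forcing \(\Phi(m_0)=0\), which the paper compresses into one sentence. Your closing finiteness remark is slightly loose, since a degenerate equation such as \(a_j=0\), \(b_j=2\) in (ii) has a whole interval of solutions, but the theorem does not need that remark.
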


\begin{proof}
The maximum of finitely many affine functions is continuous and piecewise
affine with finitely many pieces.  Subtracting
\(2\min\{1,m\}\), itself piecewise affine with one breakpoint, preserves
that property.  A strict negative set of a continuous piecewise-affine
function with finitely many pieces is open with finitely many components,
and its finite endpoints occur where one of the active affine equalities
listed above is attained.
\end{proof}

\section{Canonical corner embedding and completion fibre}
\label{sec:corner-fibre}

Assume throughout this section that
\[
 A=\max_j(a_j+b_j)<2,
 \qquad
 \delta=1-\frac A2>0.
\]
Let \(C\) be a connected local piece of the corner stratum.  For
\(z\in C\) and \(r>0\), set
\[
 p_r(z)=(r,r,z).
\]

\begin{lemma}[Diagonal Cauchy representatives]
\label{lem:diagonal-cauchy}
For each \(z\in C\), the family \(p_r(z)\) is Cauchy as \(r\downarrow0\).
More precisely, for \(0<r<s\),
\[
 d_g\bigl(p_r(z),p_s(z)\bigr)
 \le C\bigl(s^\delta-r^\delta\bigr).
\]
Consequently the class
\[
 \iota_C(z):=[p_r(z)]_{r\downarrow0}
 \in\widehat{U^\circ}
\]
is well defined and independent of the chosen sequence \(r\downarrow0\).
\end{lemma}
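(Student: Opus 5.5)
The plan is to bound \(d_g(p_r(z),p_s(z))\) by the \(g\)-length of the explicit diagonal segment \(t\mapsto(t,t,z)\), \(t\in[r,s]\). This segment lies in \(U^\circ\) once \(s<\varepsilon\). By Lemma~\ref{lem:frozen} we have
\[
g\le C_0F_+g_{\rm E}.
\]
Along the diagonal, \(F_+(t,t,z)=\sum_jC_jt^{-(a_j+b_j)}\le C't^{-A}\) for \(0<t<\varepsilon\le1\). This is the diagonal case \(m=1\), \(s\equiv1\) of Proposition~\ref{prop:power}; it uses nonnegativity of exponents and \(a_j+b_j\le A\). The Euclidean speed of the segment is \(\sqrt2\). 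Hence
\[
d_g\bigl(p_r(z),p_s(z)\bigr)\le \sqrt{2C_0C'}\int_r^s t^{-A/2}\,dt=\frac{\sqrt{2C_0C'}}{\delta}\bigl(s^\delta-r^\delta\bigr),
\]
since \(1-A/2=\delta>0\). The constant is uniform in \(z\in C\) because the bounds \(C_j\) and \(C_0\) are uniform on \(U\).

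The Cauchy property then follows at once. Given \(\epsilon>0\), choose \(r_0\) with \(C r_0^\delta<\epsilon\). For \(r<s<r_0\) we get
\[
d_g\le C s^\delta<\epsilon.
\]
So for any sequence \(r_n\downarrow0\), the points \(p_{r_n}(z)\) form a \(d_g\)-Cauchy sequence, and they define a point of \(\widehat{U^\circ}\).

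For independence of the sequence, take two sequences \(r_n\downarrow0\) and \(r_n'\downarrow0\). The same estimate gives
\[
d_g\bigl(p_{r_n}(z),p_{r_n'}(z)\bigr)\le C\bigl|r_n^\delta-r_n'^\delta\bigr|\le C\max(r_n,r_n')^\delta\to0,
\]
so the two Cauchy sequences are equivalent. Equivalently, interleave the two sequences into one sequence tending to \(0\) (not necessarily monotone after merging, but the bound is symmetric in \(r,s\)). The merged sequence is still Cauchy, so both subsequences have the same class. Therefore \(\iota_C(z)\) is well defined.

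There is no real obstacle. The only point needing care is that \(d_g\) here is the length metric of \((U^\circ,g)\), so the diagonal segment is an admissible competitor. No localization via Lemma~\ref{lem:exit} is needed for an upper bound.
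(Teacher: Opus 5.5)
Your proposal is correct and follows essentially the same route as the paper: bound \(d_g\bigl(p_r(z),p_s(z)\bigr)\) by the \(g\)-length of the diagonal segment, using \(F(t,t,z)\le Ct^{-A}\) together with a Euclidean upper bound on \(g_0\), and integrate \(t^{-A/2}\). Independence of the null sequence then comes from applying the same estimate to two arbitrary null sequences. Your version simply makes the constants and the Cauchy bookkeeping more explicit.
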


\begin{proof}
Join the two points along the normal diagonal.  On this path
\(F(t,t,z)\le C t^{-A}\), while the smooth background metric is uniformly
bounded above by a Euclidean metric.  Hence the length is bounded by
\[
 C\int_r^s t^{-A/2}\,dt
 =C'\bigl(s^\delta-r^\delta\bigr).
\]
The same estimate applied to two arbitrary null sequences proves
independence of the representative.
\end{proof}

\begin{lemma}[Uniform normal reduction]
\label{lem:uniform-normal-reduction}
There is a constant \(C>0\) such that, for every interior point
\(P=(x,y,z)\) sufficiently near the corner and
\(R=\max\{x,y\}\),
\[
 d_g\bigl(P,p_R(z)\bigr)\le C R^\delta.
\]
Consequently,
\[
 \widehat d_g\bigl(P,\iota_C(z)\bigr)\le C R^\delta.
\]
\end{lemma}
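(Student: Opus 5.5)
We connect \(P=(x,y,z)\) to the diagonal point \(p_R(z)=(R,R,z)\) by an explicit path inside the fixed fibre \(\{z\}\times[0,\varepsilon)^2\), and then bound the \(g\)-length of that path by a constant multiple of \(R^\delta\). By symmetry of the argument, assume \(x\le y=R\); the case \(y\le x\) is identical with the roles of the coordinates exchanged. The path will be the horizontal segment
\[
\sigma(t)=(t,R,z),\qquad t\in[x,R],
\]
which moves only the smaller coordinate up to \(R\) while holding the larger coordinate at \(R\). By Lemma~\ref{lem:frozen} and the uniform ellipticity of \(g_0\), the length of \(\sigma\) is bounded by
\[
C\int_x^R \sqrt{F_+(t,R,z)}\,dt .
\]
It therefore suffices to bound each monomial term \(\int_x^R t^{-a_j/2}R^{-b_j/2}\,dt\), using \(\sqrt{\sum_j C_j(\cdot)}\le\sum_j\sqrt{C_j(\cdot)}\).

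The key step is the estimate of a single term. If \(a_j<2\), then
\[
\int_0^R t^{-a_j/2}R^{-b_j/2}\,dt=\frac{R^{1-(a_j+b_j)/2}}{1-a_j/2}\le C R^{\delta},
\]
because \(a_j+b_j\le A\) and \(R<1\) give \(R^{1-(a_j+b_j)/2}\le R^{1-A/2}\). The hypothesis \(A<2\) together with \(b_j\ge0\) forces \(a_j\le A<2\), so this case covers every \(j\). The total length of \(\sigma\) is then at most \(C R^\delta\), with \(C\) depending only on the data \(\{a_j,b_j,C_j\}\), the ellipticity constant \(C_0\), and the number of terms \(N\). In particular \(C\) is uniform in \(P\) and \(z\).

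The point I expect to need the most care is the choice of path. A path that first lowered the larger coordinate, or that passed through the region where both coordinates are small, would pick up the exponent \(a_j+b_j\) in the wrong way. Freezing the larger coordinate at \(R\) is what makes \(t^{-a_j}\) the only singular factor, and this factor is integrable because \(a_j<2\). One also has to check that \(\sigma\) stays inside \(U\), so that the chart estimates apply. This holds because \(R\) is small: the hypothesis that \(P\) is sufficiently near the corner is exactly what guarantees it, and it means no appeal to Lemma~\ref{lem:exit} is needed.

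For the consequence, take a sequence \(r\downarrow0\). The triangle inequality gives
\[
\widehat d_g\bigl(P,\iota_C(z)\bigr)=\lim_{r\downarrow0} d_g\bigl(P,p_r(z)\bigr)\le d_g\bigl(P,p_R(z)\bigr)+\lim_{r\downarrow0}d_g\bigl(p_R(z),p_r(z)\bigr).
\]
Lemma~\ref{lem:diagonal-cauchy} bounds the second term by \(C(R^\delta-r^\delta)\le CR^\delta\). Combining this with the path estimate yields \(\widehat d_g\bigl(P,\iota_C(z)\bigr)\le C R^\delta\), after enlarging \(C\).
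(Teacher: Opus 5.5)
Your proposal is correct and matches the paper's proof: the same horizontal path $(t,R,z)$ with the larger coordinate frozen at $R$, the same termwise bound using $a_j\le A<2$ and $R^{1-(a_j+b_j)/2}\le R^{\delta}$, and the same passage to $\iota_C(z)$ via Lemma~\ref{lem:diagonal-cauchy} and the triangle inequality. You also make explicit two points the paper leaves implicit: the requirement $R<1$, and the fact that the path stays in $U^\circ$.
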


\begin{proof}
Suppose first that \(x\le y=R\).  Keep \(y=R\) and \(z\) fixed and
increase \(x=t\) from \(x\) to \(R\).  Uniform ellipticity and the upper
coefficient bounds give
\[
 |\dot\gamma|_g
 \le C\sum_{j=1}^N t^{-a_j/2}R^{-b_j/2}|\dot t|.
\]
Since \(a_j\le a_j+b_j\le A<2\), integration yields
\[
 L_g(\gamma)
 \le C\sum_j R^{-b_j/2}
       \int_x^R t^{-a_j/2}\,dt
 \le C\sum_j R^{1-(a_j+b_j)/2}
 \le C R^\delta.
\]
The case \(y\le x=R\) is identical, with the roles of \(x\) and \(y\)
interchanged.  This proves the first estimate.  The second follows by
joining \(p_R(z)\) to its diagonal completion class and using
Lemma~\ref{lem:diagonal-cauchy}.
\end{proof}

\begin{theorem}[Canonical normal-fibre collapse]
\label{thm:canonical-corner-fibre}
If \(A<2\), the map
\[
 \iota_C:C\longrightarrow\widehat{U^\circ}
\]
defined above is canonical.  Every interior sequence
\[
 P_n=(x_n,y_n,z)
\]
with fixed tangential label \(z\) and \(x_n,y_n\to0\) converges to
\(\iota_C(z)\).  In particular, all directionally accessible weighted
sectors over the same background point, even when they lie in different
connected components of the accessibility fan, determine the same
completion point.  No additional normal branch occurs over a fixed
background label \(z\).
\end{theorem}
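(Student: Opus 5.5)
The plan is to derive everything from Lemmas~\ref{lem:diagonal-cauchy} and~\ref{lem:uniform-normal-reduction}. First, canonicity of \(\iota_C\): by Lemma~\ref{lem:diagonal-cauchy} the class \([p_r(z)]\) does not depend on the null sequence \(r_n\downarrow0\). It depends only on the tangential label \(z\) and the fixed adapted chart, so no choices enter. I would also note that the diagonal \(x=y\) is not distinguished here. Running the argument of Lemma~\ref{lem:uniform-normal-reduction} with \(x=\theta y\) for a fixed \(\theta>0\) gives a Cauchy family at distance \(O(R^\delta)\) from \(p_R(z)\). It therefore defines the same class, so the definition is independent of the choice of representative curve.

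Second, convergence of arbitrary normal sequences. Let \(P_n=(x_n,y_n,z)\) with \(x_n,y_n\to0\), and set \(R_n=\max\{x_n,y_n\}\to0\). Lemma~\ref{lem:uniform-normal-reduction} gives \(d_g(P_n,p_{R_n}(z))\le CR_n^\delta\). Together with Lemma~\ref{lem:diagonal-cauchy}, the triangle inequality yields
\[
d_g(P_n,P_k)\le C R_n^\delta+C|R_n^\delta-R_k^\delta|+CR_k^\delta .
\]
So \((P_n)\) is \(d_g\)-Cauchy. The second estimate of Lemma~\ref{lem:uniform-normal-reduction} gives \(\widehat d_g(P_n,\iota_C(z))\le CR_n^\delta\to0\), hence \(P_n\to\iota_C(z)\) in \(\widehat{U^\circ}\). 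Since \(\delta>0\) uses only \(A<2\), this holds uniformly in the approach direction.

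Third, the sector statement. Take a directionally accessible sector \([(p,q)]\), with \(m=q/p\), and a finite-length curve in the chart \(x=r\), \(y=r^ms\) with \(s\in[s_-,s_+]\) and \(r\to0\). Any sequence of points on it has the form \((r_n,r_n^m s_n,z_n)\). If \(z\) is held fixed, as in the statement's normal fibre, the second step applies directly. If \(z_n\to z\) varies along the curve, I would additionally need a tangential estimate
\[
d_g\bigl(p_R(z),p_R(z')\bigr)\le C\,R^{-A/2}|z-z'|,
\]
obtained by moving in \(z\) along the slice \(x=y=R\) where \(F\asymp R^{-A}\). This alone is not enough, because \(R^{-A/2}\) blows up. Instead I would compare via the route
\[
p_R(z)\to p_{R'}(z)\to p_{R'}(z')\to p_R(z'),
\]
choosing the intermediate level \(R'\ge R\) to balance \(R'^{\delta}\) against \(R'^{-A/2}|z-z'|\). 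This is the main obstacle, but it only concerns non-fixed labels. For the statement as posed, with fixed \(z\), I would simply restrict to sequences in the sector with constant \(z\); these converge to \(\iota_C(z)\) for every accessible \(m\), whichever component of \(\mathcal F_{\mathrm{acc}}\) it lies in.

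Finally, ``no additional normal branch'' is a restatement. Any completion point arising as a limit of interior points \((x_n,y_n,z)\) with \(x_n,y_n\to0\) equals \(\iota_C(z)\) by the second step, so the completion fibre over \(z\) reachable by normal approach is the single point \(\iota_C(z)\). If needed, Lemma~\ref{lem:exit} ensures that these local distance estimates are not affected by curves leaving \(U\).
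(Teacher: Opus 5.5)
Your proposal is correct and follows the paper's proof. The paper deduces the theorem directly from Lemma~\ref{lem:uniform-normal-reduction} with \(R_n=\max\{x_n,y_n\}\to0\), noting that this bound does not depend on the slope and so covers every accessible sector over \(z\) at once, exactly as in your second and final steps; your explicit Cauchy check and your varying-label discussion (which the paper handles through the snowflake upper bound in Theorem~\ref{thm:corner-cauchy-classification}) are not needed for the statement as posed.
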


\begin{proof}
The assertion follows immediately from
Lemma~\ref{lem:uniform-normal-reduction}, because
\(R_n=\max\{x_n,y_n\}\to0\).  This estimate is independent of the slope
of approach and does not require the sequence to remain in a single
weighted chart.  Therefore every directionally accessible sector over
\(z\) has the same limit \(\iota_C(z)\), regardless of intervening
inaccessible sectors in the projective accessibility fan.
\end{proof}

\begin{example}[A three-term Newton interaction]\label{ex:three-term}
Consider
\[
F(x,y)=x^{-1/2}+x^{-1/5}y^{-6/5}+y^{-3/2}.
\]
The exponent vectors are
\[
\left(\frac12,0\right),\qquad
\left(\frac15,\frac65\right),\qquad
\left(0,\frac32\right),
\]
and hence
\[
H(m)
=
\max\left\{
\frac12,\,
\frac15+\frac65m,\,
\frac32m
\right\}.
\]
The first and second affine orders agree at \(m=1/4\), while the second and
third agree at \(m=2/3\).  Therefore
\[
H(m)=
\begin{cases}
\frac12, & 0<m\le \frac14,\\[3pt]
\frac15+\frac65m, & \frac14\le m\le\frac23,\\[3pt]
\frac32m, & m\ge\frac23.
\end{cases}
\]

For \(0<m<1\), accessibility is equivalent to \(H(m)<2m\).  On the first
piece this requires \(m>1/4\); on the middle and third pieces it holds
strictly for every \(m>1/4\).  For \(m\ge1\), accessibility is equivalent to
\(H(m)<2\), which on the last piece gives \(m<4/3\).  The accessibility fan
is therefore the single interval
\[
\mathcal F_{\mathrm{acc}}
=
\left(\frac14,\frac43\right),
\]
with logarithmic divergence at both endpoints.

The diagonal order is
\[
A=H(1)=\frac32<2,
\]
so the corner is accessible.  If the corner stratum has dimension \(k\),
Theorem~\ref{thm:snowflake} and Corollary~\ref{cor:hd} give
\[
d_{\partial}\asymp d_C^{1/4},
\qquad
\dim_H(C,d_{\partial})=4k.
\]
Thus one explicit Newton polygon simultaneously determines the accessible
weighted sectors, the corner threshold, the boundary snowflake exponent, and
the Hausdorff dimension.
\end{example}

\section{Boundary metric and collapse}\label{sec:boundary-metric}

Assume \(A<2\), let \(C\) and \(\iota_C\) be as in
Section~\ref{sec:corner-fibre}, and let \(d_C\) be any smooth Riemannian
distance induced by the restriction of \(g_0\) to \(TC\).  Define
\[
 d_{\partial}(z_0,z_1)
 =\widehat d_g\bigl(\iota_C(z_0),\iota_C(z_1)\bigr).
\]

\begin{theorem}[Boundary snowflake theorem]\label{thm:snowflake}
For sufficiently close \(z_0,z_1\in C\),
\begin{equation}\label{eq:snowflake}
\boxed{
d_{\partial}(z_0,z_1)\asymp
d_C(z_0,z_1)^{\,1-A/2}.
}
\end{equation}
\end{theorem}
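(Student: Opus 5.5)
Write \(\ell=d_C(z_0,z_1)\), \(\delta=1-A/2\), and work in the chart \(U\) with \(g\) compared to \(F_\pm g_{\rm E}\) by Lemma~\ref{lem:frozen}. Since \(C\) is a small coordinate patch, \(d_C\) is comparable to \(|z_0-z_1|\). The proof has two halves.

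\emph{Upper bound.} Lift off the corner to height \(r=\ell\), translate tangentially, and descend again. By Lemma~\ref{lem:diagonal-cauchy},
\[
\widehat d_g\bigl(\iota_C(z_i),p_\ell(z_i)\bigr)\le C\ell^\delta .
\]
Join \(p_\ell(z_0)\) to \(p_\ell(z_1)\) along the path \((\ell,\ell,z(t))\), with \(z(t)\) a \(d_C\)-geodesic segment. On this path \(F\le C\ell^{-A}\), so its length is at most \(C\ell^{-A/2}\ell=C\ell^\delta\). The triangle inequality then gives \(d_\partial\le C\ell^\delta\).

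\emph{Lower bound.} Take Cauchy representatives \(p_{r}(z_0)\) and \(p_{r}(z_1)\) with \(r\to0\). It suffices to show that every curve \(\gamma\) joining them has \(L_g(\gamma)\ge c\,\ell^\delta\), uniformly in small \(r\). By Lemma~\ref{lem:exit}, curves leaving \(U\) have length at least \(\eta\ge c\ell^\delta\) once \(\ell\) is small, so we may assume \(\gamma\subset U\). Let \(\rho=\max\{x,y\}\), and use the pointwise bound
\[
F\ge c\,x^{-a_{j_*}}y^{-b_{j_*}}\ge c\,\rho^{-A},
\]
with \(j_*\) the term realizing \(A\), as in the proof of Theorem~\ref{thm:corner}. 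This gives \(|\dot\gamma|_g\ge c\rho^{-A/2}|\dot\gamma|_{\rm E}\). Split into two cases by a threshold \(\theta\ell\).

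\begin{enumerate}[label=(\alph*)]
\item If \(\gamma\) reaches \(\rho\ge\theta\ell\), then it crosses every level \(\rho\in(r,\theta\ell)\). The level-crossing argument from Theorem~\ref{thm:direction} gives
\[
L_g\ge c\int_{r}^{\theta\ell}\rho^{-A/2}\,d\rho\ge c\ell^\delta-Cr^\delta .
\]
\item Otherwise \(\rho\le\theta\ell\) throughout. The tangential displacement is at least \(c\ell\), so
\[
L_g\ge c(\theta\ell)^{-A/2}\int|\dot z|\ge c\ell^{\delta}.
\]
\end{enumerate}

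Letting \(r\to0\) yields \(d_\partial\ge c\ell^\delta\). Combining the two halves gives \eqref{eq:snowflake}.

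The main obstacle is the lower bound. One must ensure the estimate passes to the completion: distances between completion points are limits of \(d_g\) between representatives, so the \(Cr^\delta\) error in case (a) must be shown to vanish in the limit. The case split must also cover curves that wander. This works cleanly because the bound \(\rho^{-A}\) used above is pointwise and needs no monotonicity. I would first verify carefully the inequality \(x^{-a}y^{-b}\ge\rho^{-(a+b)}\) with \(\rho=\max\{x,y\}\), which holds since \(x,y\le\rho\) and \(a,b\ge0\). That inequality is the only place where the nonnegativity of the exponents is used.
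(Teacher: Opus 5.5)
Your proposal is correct and follows essentially the same route as the paper. For the upper bound, both go normally to height $r\asymp d_C(z_0,z_1)$ and translate tangentially there. For the lower bound, both use the dichotomy between the radial level-crossing barrier and the tangential cost under $F\ge c\,R^{-A}$, which is exactly the paper's $\max\{R^{1-A/2},R^{-A/2}d_C(z_0,z_1)\}$ estimate; your use of $\max\{x,y\}$ instead of the Euclidean $\rho$, and a fixed threshold $\theta\ell$ instead of optimizing over $R$, are cosmetic differences. One small correction to your closing remark: nonnegativity of the exponents is also used in Lemma~\ref{lem:exit}, where it supplies the positive lower bound on $F$ that rules out shortcuts leaving the chart.
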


\begin{proof}
Write
\[
\delta=d_C(z_0,z_1).
\]
For the upper bound, move normally from the corner to the diagonal level
\(x=y=r\), move tangentially there, and return normally.  The two normal
legs cost \(O(r^{1-A/2})\).  On the diagonal, \(F\asymp r^{-A}\), so the
tangential leg costs \(O(r^{-A/2}\delta)\).  Therefore
\[
d_{\partial}(z_0,z_1)
\le
C\left(r^{1-A/2}+r^{-A/2}\delta\right).
\]
Choosing \(r\asymp\delta\) gives
\[
d_{\partial}(z_0,z_1)\le C\delta^{1-A/2}.
\]

For the lower bound, let \(\gamma\) be any sufficiently short curve joining
near-corner representatives of \(z_0,z_1\), and let
\[
R=\max_\gamma \rho,\qquad
\rho=(x^2+y^2)^{1/2}.
\]
If the representatives lie on the level \(\rho=\varepsilon\), the radial
barrier from the proof of Theorem~\ref{thm:corner} gives
\[
L_g(\gamma)\ge c\bigl(R^{1-A/2}-\varepsilon^{1-A/2}\bigr).
\]
On the portion of the curve with \(\rho\le R\), positivity of a maximizing
term gives \(F\ge cR^{-A}\).  Since the tangential projection must connect
\(z_0\) to \(z_1\), its \(g_0\)-length is at least \(c\delta\), and hence
\[
L_g(\gamma)\ge cR^{-A/2}\delta.
\]
Thus
\[
L_g(\gamma)\ge
c\max\{R^{1-A/2},R^{-A/2}\delta\}.
\]
Minimizing the right-hand side over \(R>0\) occurs at \(R\asymp\delta\),
yielding
\[
L_g(\gamma)\ge c\delta^{1-A/2}.
\]
To pass from interior representatives to completion points, take
representatives with \(\rho=\varepsilon\), apply the preceding estimates
with the harmless endpoint error \(O(\varepsilon^{1-A/2})\), and then let
\(\varepsilon\downarrow0\).  Lemma~\ref{lem:exit} rules out a cheaper
ambient shortcut outside the local chart for sufficiently small
\(\delta\).
\end{proof}

\begin{corollary}[Canonical corner embedding]
\label{cor:canonical-corner-embedding}
The map
\[
 \iota_C:(C,d_C)\longrightarrow\widehat{U^\circ}
\]
is injective.  Its image is a canonical copy of the corner stratum, and the
induced metric is locally bi-Lipschitz equivalent to
\(d_C^{\,1-A/2}\).
\end{corollary}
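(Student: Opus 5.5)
The corollary follows almost directly from Theorem~\ref{thm:snowflake} together with the well-definedness established in Lemma~\ref{lem:diagonal-cauchy} and Theorem~\ref{thm:canonical-corner-fibre}. The plan is to verify three things in turn: that \(\iota_C\) is well defined and canonical, that it is injective, and that its image carries the stated local metric. Canonicity is already contained in Theorem~\ref{thm:canonical-corner-fibre}. Every interior sequence over a fixed label \(z\) with \(x_n,y_n\to0\) converges to \(\iota_C(z)\), so the image point does not depend on the diagonal parametrization. By Lemma~\ref{lem:uniform-normal-reduction}, it does not depend on any choice of approach direction either.

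For the metric statement, I would simply restate Theorem~\ref{thm:snowflake}: for \(z_0,z_1\) in a sufficiently small neighborhood, \(\widehat d_g(\iota_C(z_0),\iota_C(z_1))=d_\partial(z_0,z_1)\) is comparable to \(d_C(z_0,z_1)^{1-A/2}\). This is precisely local bi-Lipschitz equivalence of the pulled-back metric with the snowflake \(d_C^{\,1-A/2}\). The snowflake is itself a metric, since \(0<1-A/2\le1\) and \(t\mapsto t^{\delta}\) is concave and subadditive. The constants depend only on the chart data, so the comparison is uniform on compact subsets of \(C\).

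Injectivity has a local part and a global part. For \(z_0\ne z_1\) close together, the lower bound \(d_\partial\ge c\,d_C^{\,1-A/2}>0\) separates the images. For points that are not close, I would argue via Lemma~\ref{lem:exit}. Choose a neighborhood \(W\) of \(z_0\) of small \(d_C\)-radius. Any curve from a near-corner representative of \(\iota_C(z_0)\) to one of \(\iota_C(z_1)\), with \(z_1\notin W\), either leaves \(U\), costing at least \(\eta\), or stays in \(U\). In the latter case I would apply the lower-bound argument of Theorem~\ref{thm:snowflake} to an intermediate point whose label lies on \(\partial W\), giving a cost of at least \(c\,(\mathrm{dist}_{d_C}(z_0,\partial W))^{1-A/2}\). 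Either way the completion distance is bounded below by a positive constant, so \(\iota_C(z_0)\ne\iota_C(z_1)\).

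I expect the main obstacle to be this non-local injectivity step. The lower bound in Theorem~\ref{thm:snowflake} is stated only for nearby labels, so it has to be extended to a bound for curves whose tangential projection exits a small ball. The plan is to truncate the curve at its first exit time from \(\{z\in W\}\) and apply the same radial-versus-tangential maximization to that initial segment. That requires checking that the argument never used the endpoint being a completion point, only that the tangential projection travels a \(d_C\)-distance \(\delta\). Since \(C\) is a connected local piece, the conclusion is local in any case, and injectivity on \(C\) follows.
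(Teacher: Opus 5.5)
Your proposal is correct. On the bi-Lipschitz claim and on local injectivity it agrees with the paper: both read off the two-sided estimate of Theorem~\ref{thm:snowflake}, whose lower bound \(d_\partial\ge c\,d_C^{\,1-A/2}\) is strictly positive for distinct nearby labels.

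Where you differ is the passage from local to global injectivity. The paper does not extend the lower bound at all. It shrinks the connected coordinate patch until every pair of labels lies in the range where Theorem~\ref{thm:snowflake} applies, so injectivity holds on that possibly smaller patch.

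You instead prove a separation estimate for labels that are far apart. Either a connecting curve exits \(U\) and pays \(\eta\) by Lemma~\ref{lem:exit}, or you truncate it at the first time its tangential label reaches \(\partial W\) and rerun the radial-versus-tangential argument on that initial segment. This gives a cost of order \(\operatorname{dist}_{d_C}(z_0,\partial W)^{1-A/2}\), up to an endpoint error that vanishes as the representatives approach the corner.

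The check you flag goes through. The lower-bound argument uses only three facts about the segment: it starts at small \(\rho\), it satisfies \(\rho\le R\) throughout (so \(F\ge cR^{-A}\) there), and its tangential projection travels a fixed \(d_C\)-distance. The endpoint need not be a completion point.

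The trade-off is this. The paper's route is a one-line reduction, but it yields injectivity only after shrinking \(C\). Yours costs a short truncation verification, but it gives injectivity on the whole chart piece, with an explicit positive lower bound on \(d_\partial\) between labels that are not close.
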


\begin{proof}
The lower bound in Theorem~\ref{thm:snowflake} is strictly positive for
distinct sufficiently close labels.  Local injectivity therefore follows,
and injectivity on a connected coordinate patch follows after shrinking the
patch if necessary.  The bi-Lipschitz statement is exactly
Theorem~\ref{thm:snowflake}.
\end{proof}

\begin{corollary}[Hausdorff dimension]\label{cor:hd}
If the smooth corner stratum has dimension \(k\), then locally
\[
\boxed{
\dim_H(C,d_{\partial})
=
\frac{k}{1-A/2}.
}
\]
\end{corollary}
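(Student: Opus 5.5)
The plan is to reduce the statement to the standard behaviour of Hausdorff dimension under snowflaking and bi-Lipschitz maps, using Theorem~\ref{thm:snowflake}. Put \(\alpha=1-A/2\in(0,1]\); this is positive because \(A<2\) is assumed throughout Section~\ref{sec:boundary-metric}. Fix a coordinate patch \(V\subset C\) that is small enough for \eqref{eq:snowflake} to hold for every pair \(z_0,z_1\in V\). After shrinking \(V\) via Corollary~\ref{cor:canonical-corner-embedding}, the identity map \((V,d_\partial)\to(V,d_C^{\,\alpha})\) is bi-Lipschitz. Bi-Lipschitz maps preserve Hausdorff dimension, because they distort \(\cH^s\)-measures by at most a factor \(L^s\). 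It therefore suffices to prove that \(\dim_H(V,d_C^{\,\alpha})=k/\alpha\).

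The next step is the snowflake scaling identity. A set of \(d_C\)-diameter \(t\) has \(d_C^{\,\alpha}\)-diameter \(t^\alpha\). A cover by sets \(E_i\) is therefore the same in both metrics, and
\[
\sum_i \bigl(\operatorname{diam}_{d_C^{\alpha}}E_i\bigr)^{s}
=\sum_i \bigl(\operatorname{diam}_{d_C}E_i\bigr)^{\alpha s}.
\]
The fineness condition \(\operatorname{diam}<\epsilon\) corresponds to \(\operatorname{diam}<\epsilon^{1/\alpha}\), and both tend to zero together. Hence \(\cH^{s}_{d_C^\alpha}=\cH^{\alpha s}_{d_C}\) on subsets of \(V\), so the critical exponent scales by \(1/\alpha\). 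This gives \(\dim_H(V,d_C^{\,\alpha})=\dim_H(V,d_C)/\alpha\).

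The last step is \(\dim_H(V,d_C)=k\). Here \(d_C\) is a smooth Riemannian distance on a \(k\)-dimensional patch. In local coordinates on a relatively compact patch it is bi-Lipschitz to the Euclidean distance. Relatively compact open sets in \(\R^k\) have positive finite \(\cH^k\) measure, so their dimension is \(k\).

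The only point that needs care is that \eqref{eq:snowflake} is valid only for \emph{sufficiently close} points. I would handle this by noting that Hausdorff dimension is local and countably stable. The approach is to cover the patch by countably many small balls on which the comparison holds uniformly, and to use compactness of \(\overline V\) to get uniform comparison constants. On each piece the dimension equals \(k/\alpha\), and countable stability then gives the same value locally on \(C\), which is the statement of the corollary.
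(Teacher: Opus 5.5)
Your proposal is correct and follows the paper's own argument. Both apply Theorem~\ref{thm:snowflake} with bi-Lipschitz invariance of Hausdorff dimension, use that snowflaking by the exponent $1-A/2$ multiplies dimension by $1/(1-A/2)$, and use that a smooth $k$-dimensional Riemannian distance has dimension $k$. The only difference is that you verify the snowflake scaling of Hausdorff measures and the local Euclidean comparison directly, where the paper cites Ahlfors $k$-regularity and \cite{BBI2001,Heinonen2001}.
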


\begin{proof}
A \(k\)-dimensional smooth Riemannian metric is locally Ahlfors
\(k\)-regular.  Replacing its distance \(d_C\) by the snowflake
\(d_C^\theta\), \(0<\theta\le1\), rescales Hausdorff dimension by
\(1/\theta\), while bi-Lipschitz equivalence preserves Hausdorff
dimension; see, for example, \cite{BBI2001,Heinonen2001}.  Apply
Theorem~\ref{thm:snowflake} with \(\theta=1-A/2\).
\end{proof}

\begin{corollary}[Resolved-sector collapse]
\label{prop:collapse}
Fix an accessible weighted direction \(m\) and a compact interval
\(I\Subset(0,\infty)\) in its angular variable \(s\).  All points
\((r,s,z_0)\) with \(s\in I\) converge to the same completion point as
\(r\downarrow0\), for fixed \(z_0\).
\end{corollary}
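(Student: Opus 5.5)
The plan is to reduce the statement to the uniform normal reduction of Lemma~\ref{lem:uniform-normal-reduction}, rather than working inside the weighted chart. First observe that an accessible weighted direction \(m\) exists only if \(H(m)<2\min\{1,m\}\). Since \(H\) is increasing in \(m\) (all \(b_j\ge0\)), this forces \(A=H(1)<2\); indeed, if \(m\ge1\) then \(A\le H(m)<2\), and if \(m<1\) then \(A\le H(m)+(1-m)\max_j b_j\), which needs a little care. The cleanest route is to bound directly through the triangle inequality: by the sufficiency half of Theorem~\ref{thm:direction}, each constant-\(s\) ray has finite length. Without \(A<2\), however, Section~\ref{sec:corner-fibre} is unavailable, so I would rather argue via the sector itself, as follows.

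\emph{Step 1 (each ray converges).} For fixed \(s\in I\) and \(z_0\), the curve \(r\mapsto(r,r^ms,z_0)\) has finite \(g\)-length near \(r=0\) by the sufficiency computation in Theorem~\ref{thm:direction}. Its tail lengths therefore tend to zero, and the points form a Cauchy family with a limit \(Q_s\in\widehat{U^\circ}\).

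\emph{Step 2 (limits agree).} For \(s,s'\in I\) I would join \((r,r^ms,z_0)\) to \((r,r^ms',z_0)\) along the arc \(r\) fixed, \(\sigma\) from \(s\) to \(s'\). Along it \(dx=0\) and \(dy=r^m\,d\sigma\), and Proposition~\ref{prop:power} gives \(F\asymp r^{-H(m)}\) uniformly for \(\sigma\in I\). The arc length is then at most \(C\,r^{m-H(m)/2}|I|\). Since accessibility gives \(H(m)<2\min\{1,m\}\le 2m\), the exponent is positive and the connecting length tends to \(0\). Hence \(Q_s=Q_{s'}\), and the convergence is in fact uniform in \(s\in I\): combine the uniform tail bound from Step~1 (its constants depend only on \(s_\pm\)) with this arc estimate.

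\emph{Step 3 (identification, optional).} If \(A<2\) also holds, Theorem~\ref{thm:canonical-corner-fibre} directly identifies the common limit with \(\iota_C(z_0)\), because \(\max\{r,r^ms\}\to0\). In that case the corollary is immediate from Lemma~\ref{lem:uniform-normal-reduction}.

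The only delicate point is Step~2. The exact coframe \eqref{eq:exact-coframe} must be used so that the angular displacement costs \(r^m\,d\sigma\) and not \(d\sigma\). The needed inequality \(H(m)<2m\) is exactly the second barrier in Theorem~\ref{thm:direction}, so it is supplied by the accessibility hypothesis in both regimes \(m\ge1\) and \(m<1\).
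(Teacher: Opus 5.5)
Your proof is correct, but it obtains the limit by a different route from the paper. The paper uses the same angular-arc bound \(O(r^{m-H(m)/2}|s_1-s_2|)\) only to show that angular segments shrink to diameter zero. It then gets both existence and identification of the limit from Theorem~\ref{thm:canonical-corner-fibre}, that is, from Lemma~\ref{lem:uniform-normal-reduction}, which uses the section's standing assumption \(A<2\). You instead get existence from the finite length of each constant-\(s\) ray: this is the sufficiency half of Theorem~\ref{thm:direction}, with tail bounds uniform in \(s\in I\). You then use the arc bound only to glue the limits \(Q_s\). Your version stays inside one weighted sector and gives uniform convergence for arbitrary choices \(s(r)\in I\). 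On its own, however, it does not name the limit. The paper's route identifies it as \(\iota_C(z_0)\), and so shows that accessible sectors of different slopes, even in different fan components, share the same limit. Your worry about \(A<2\) is unfounded, and \(A\le H(m)+(1-m)\max_j b_j\) is not the inequality you need. For \(0<m\le1\), nonnegativity of the exponents gives \(H(m)/m=\max_j(a_j/m+b_j)\ge A\) (Proposition~\ref{prop:diagmin}). Hence accessibility of any one direction forces \(A\le H(m)/\min\{1,m\}<2\). So your Step~3 always applies. Lemma~\ref{lem:uniform-normal-reduction} alone already gives \(\widehat d_g\bigl((r,r^ms,z_0),\iota_C(z_0)\bigr)\le C\max\{r,r^ms\}^{1-A/2}\to0\) uniformly for \(s\in I\), which makes even the angular estimate logically redundant.
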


\begin{proof}
At fixed \(r\), the angular segment has
\[
dy=r^m\,ds.
\]
Using \(F\asymp r^{-H(m)}\), its length is
\[
O\!\left(r^{m-H(m)/2}|s_1-s_2|\right).
\]
Accessibility implies \(H(m)<2m\), so this tends to zero.  Hence the diameter of every compact interior angular interval tends to zero
in the completion.  By Theorem~\ref{thm:canonical-corner-fibre}, its limit
is the canonical point \(\iota_C(z_0)\), and the same conclusion holds
across all accessible components of the fan.
\end{proof}

\begin{theorem}[Classification of the local corner fibre]
\label{thm:corner-cauchy-classification}
Let \(W\Subset U\), and let
\[
 P_n=(x_n,y_n,z_n)\in W^\circ
\]
be a \(d_g\)-Cauchy sequence whose background normal coordinates satisfy
\(x_n,y_n\to0\).  Then \(z_n\) converges to a unique point \(z_*\in C\),
and
\[
 P_n\longrightarrow\iota_C(z_*)
\]
in the metric completion.  Thus the completion fibre over \(z_*\) is a
singleton.
\end{theorem}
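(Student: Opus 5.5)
The plan has three steps: show that the tangential labels \(z_n\) converge, identify the limit in the completion via Lemma~\ref{lem:uniform-normal-reduction}, and compare \(\iota_C(z_n)\) with \(\iota_C(z_*)\) using the upper bound of Theorem~\ref{thm:snowflake}.

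\emph{Step 1: the labels \(z_n\) are Cauchy in \(d_C\).} Since the exponents are nonnegative, \(F\) is bounded below on \(U^\circ\), as in the proof of Lemma~\ref{lem:exit}. Together with uniform ellipticity this gives \(g\ge c\,g_{\rm E}\) on \(U^\circ\). Lemma~\ref{lem:exit} also lets us assume that, for large \(n,m\), near-minimizing curves between \(P_n\) and \(P_m\) stay inside \(U\). On such a curve the Euclidean length dominates the length of the tangential projection. Hence
\[
 d_C(z_n,z_m)\le C\,d_g(P_n,P_m)+o(1).
\]
So \((z_n)\) is Cauchy for the smooth metric \(d_C\), and because \(\overline W\) is compact it converges to a unique \(z_*\in C\). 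If one wants a sharper lower bound, Theorem~\ref{thm:snowflake} gives \(d_C\lesssim d_\partial^{1/(1-A/2)}\), but the crude Euclidean comparison already suffices.

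\emph{Step 2: reduce to diagonal representatives.} Put \(R_n=\max\{x_n,y_n\}\), which tends to \(0\). By Lemma~\ref{lem:uniform-normal-reduction},
\[
 \widehat d_g\bigl(P_n,\iota_C(z_n)\bigr)\le C R_n^\delta\to0 .
\]
One point needs care: the constant in that lemma must be uniform in the label. This holds because the coefficient bounds and the ellipticity constants are uniform on \(\overline W\).

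\emph{Step 3: move the label to \(z_*\).} By the upper bound of Theorem~\ref{thm:snowflake}, which is the move-to-diagonal-level construction and needs only that \(z_n\) and \(z_*\) are close,
\[
 \widehat d_g\bigl(\iota_C(z_n),\iota_C(z_*)\bigr)\le C\,d_C(z_n,z_*)^{1-A/2}\to0 .
\]
The triangle inequality then gives \(P_n\to\iota_C(z_*)\) in \(\widehat{U^\circ}\).

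For the fibre statement, let \(P_n\) be any Cauchy sequence over \(z_*\), meaning its labels converge to \(z_*\). It has the same limit \(\iota_C(z_*)\), so the fibre is a singleton. Uniqueness of \(z_*\) is compatible with the completion because of Corollary~\ref{cor:canonical-corner-embedding}: if \(\iota_C(z_*)=\iota_C(z_*')\) then \(z_*=z_*'\).

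The main obstacle is Step 1, namely ruling out cheap detours between \(P_n\) and \(P_m\). The \(g\)-lower bound is only Euclidean, which is enough for Cauchy-ness but relies on localization. For this I will invoke Lemma~\ref{lem:exit} with \(W\Subset U\): once \(d_g(P_n,P_m)<\eta\), every competing curve stays in \(U\), where \(g\ge c\,g_{\rm E}\) holds.
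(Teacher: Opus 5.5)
Your proposal is correct and follows the paper's proof step for step. You use Lemma~\ref{lem:exit} together with \(g\ge c\,g_{\rm E}\) to make the labels \(z_n\) converge, Lemma~\ref{lem:uniform-normal-reduction} to get \(\widehat d_g(P_n,\iota_C(z_n))\le CR_n^\delta\), the upper half of Theorem~\ref{thm:snowflake} to pass from \(\iota_C(z_n)\) to \(\iota_C(z_*)\), the triangle inequality, and injectivity of \(\iota_C\) for the fibre statement, which is exactly the paper's argument, with your remarks on uniformity of constants and on the singleton fibre being accurate elaborations.
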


\begin{proof}
By Lemma~\ref{lem:exit}, sufficiently short connections between sufficiently
late terms cannot leave \(U\).  On \(U\), the lower coefficient bound and
uniform ellipticity imply
\[
 g\ge c\,g_{\rm E}.
\]
Hence every sufficiently late tail is Euclidean Cauchy, so \(z_n\to z_*\)
for a unique \(z_*\).

Set \(R_n=\max\{x_n,y_n\}\).  Lemma~\ref{lem:uniform-normal-reduction}
gives
\[
 \widehat d_g\bigl(P_n,\iota_C(z_n)\bigr)\le C R_n^\delta\to0.
\]
The upper estimate in Theorem~\ref{thm:snowflake} gives
\[
 \widehat d_g\bigl(\iota_C(z_n),\iota_C(z_*)\bigr)
 \le C d_C(z_n,z_*)^\delta\to0.
\]
The triangle inequality proves the claim.  Uniqueness of the fibre follows
from injectivity of \(\iota_C\).
\end{proof}

\section{Rational Newton data and geometric resolution}\label{sec:rational-resolution}

The metric results above allow arbitrary real exponents.  We now impose
rationality only to obtain an integral geometric realization.

Assume \(a_j,b_j\in\Q_{\ge0}\).  Choose \(D\in\mathbb N\) such that
\[
A_j=Da_j,\qquad B_j=Db_j
\]
are integers.  Introduce rooted boundary variables
\begin{equation}\label{eq:root}
x=X^D,\qquad y=Y^D.
\end{equation}

\begin{lemma}[Root-chart compatibility]\label{lem:root}
If two positive defining functions satisfy
\[
x'=e^\phi x,
\]
then the rooted variables satisfy
\[
X'=e^{\phi/D}X.
\]
Hence a fixed common denominator defines compatible smooth rooted boundary
coordinates.
\end{lemma}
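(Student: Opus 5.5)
The plan is to compute directly from the definition $X=x^{1/D}$, and then argue that the root of a smooth positive unit is again a smooth positive unit. First, since $x'=e^{\phi}x$ with both $x,x'$ positive on the interior, raising to the power $1/D$ (taking the positive real root) gives
\[
X'=(x')^{1/D}=\bigl(e^{\phi}x\bigr)^{1/D}=e^{\phi/D}x^{1/D}=e^{\phi/D}X .
\]
This holds pointwise on $U^\circ$, where all quantities are positive, so the identity is immediate there.

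Second, for the compatibility claim we must show that the transition $X\mapsto X'$ is smooth up to the boundary face in the rooted chart. Here $\phi$ is a smooth function of the original variables $(x,y,z)=(X^D,Y^D,z)$. Hence $\phi\circ(X^D,Y^D,z)$ is smooth in $(X,Y,z)$ up to $X=0$ and $Y=0$, because composition with the polynomial map $X\mapsto X^D$ preserves smoothness. Consequently $e^{\phi/D}$ is a smooth, nonvanishing function of the rooted variables. So $X'=e^{\phi/D}X$ is a smooth defining function for $\{X=0\}$, and the inverse relation $X=e^{-\phi/D}X'$ is smooth by the same argument. The same reasoning applies to $y,Y$.

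Third, we must check that the dependence of $\phi$ on the other chart's variables does not break smoothness. The transition between two rooted charts is the map
\[
(X,Y,z)\mapsto\bigl(e^{\phi/D}X,\ e^{\psi/D}Y,\ z'(X^D,Y^D,z)\bigr).
\]
Every component is smooth, with Jacobian invertible at the corner, so it is a local diffeomorphism of rooted corner charts.

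The only step that needs care is that $D$ is fixed and common to both charts. If different denominators were used, $X'=(e^{\phi}x)^{1/D'}$ would involve $X^{D/D'}$, which is not smooth at $X=0$ unless $D/D'\in\mathbb N$. Fixing one $D$ for all exponent data removes this obstruction. I expect no real obstacle beyond noting that smoothness of $\phi$ in $(x,y,z)$ transfers under $x=X^D$, and not conversely.
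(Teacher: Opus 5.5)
Your proposal is correct and follows essentially the same route as the paper, which verifies $X'=e^{\phi/D}X$ by raising it to the $D$-th power (equivalent to your taking the positive $D$-th root) and then observes that $e^{\phi/D}$ is a smooth positive multiplier with smooth inverse. Your extra details make explicit what the paper's one-line proof leaves implicit: smoothness of $\phi$ passes to the rooted variables through $X\mapsto X^D$, you write out the rooted transition map, and you explain why a single common $D$ is needed.
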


\begin{proof}
Raise the asserted identity to the \(D\)-th power.  The multiplier
\(e^{\phi/D}\) is smooth and positive, with smooth inverse.
\end{proof}

\begin{remark}
The root map \eqref{eq:root} is not being identified with an ordinary
generalized boundary blow-up.  It changes the smooth boundary structure.
Generalized blow-up theory \cite{KottkeMelrose2015,Kottke2018} is invoked
only after denominator clearing, for the subsequent monoidal refinement.
\end{remark}

On the rooted structure,
\[
F=\sum_j \widetilde u_jX^{-A_j}Y^{-B_j}.
\]
Write
\[
\alpha_j=(A_j,B_j)\in\mathbb Z_{\ge0}^2,
\qquad
P=\operatorname{conv}\{\alpha_1,\ldots,\alpha_N\}
\subset\mathbb R^2.
\]
For a face \(Q\subset P\), define its \emph{maximizing normal cone} by
\[
N_P^+(Q)
=
\left\{
\xi\in\mathbb R^2:
\langle \xi,\alpha\rangle
=
\max_{\beta\in P}\langle \xi,\beta\rangle
\text{ for every }\alpha\in Q
\right\}.
\]
The collection of these cones is the maximizing normal fan
\(\mathcal N^+(P)\).  Only its intersection with the positive quadrant is
relevant to boundary blow-down directions.  Repeated exponent points and
nonvertex points do not change \(P\) or \(\mathcal N^+(P)\), although their
positive coefficients remain part of the residual unit.

Let \(\Sigma\) be a finite regular rational fan in the positive quadrant
refining \(\mathcal N^+(P)\).  In a regular two-dimensional cone with primitive
generators
\[
v=(p,q),\qquad w=(p',q'),
\qquad |\det(v,w)|=1,
\]
use the monomial chart
\[
X=r^ps^{p'},\qquad
Y=r^qs^{q'}.
\]
Then
\[
X^{-A_j}Y^{-B_j}
=
r^{-(pA_j+qB_j)}
s^{-(p'A_j+q'B_j)}.
\]

\begin{theorem}[Positive monomial factorization on a regular Newton chart]
\label{thm:toric}
On each regular cone \(\sigma\in\Sigma\) contained in a maximizing normal cone \(N_P^+(Q)\),
\[
F=r^{-M}s^{-N}U(r,s,z),
\]
where \(M,N\in\mathbb Z_{\ge0}\) and \(U\) is smooth and strictly positive
up to the resolved faces.
\end{theorem}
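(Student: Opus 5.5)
The plan is to factor out the monomial belonging to one fixed exponent and show that what remains is a positive smooth unit. The choice of exponent comes from the cone containment. Since \(\sigma\subset N_P^+(Q)\), every \(\xi\in\sigma\) satisfies
\[
\langle\xi,\alpha\rangle=\max_{\beta\in P}\langle\xi,\beta\rangle
\quad\text{for all }\alpha\in Q .
\]
Fix a vertex \(\alpha_{k}=(A_k,B_k)\) of \(Q\); this could be any \(\alpha_j\) lying in \(Q\). Apply the maximizing property to the two generators \(v,w\) of \(\sigma\):
\[
pA_k+qB_k\ge pA_j+qB_j,\qquad p'A_k+q'B_k\ge p'A_j+q'B_j
\]
for every \(j\), because each \(\alpha_j\in P\). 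Set \(M=pA_k+qB_k\) and \(N=p'A_k+q'B_k\). These are nonnegative integers, since the generators are primitive integral vectors in the closed positive quadrant and \(A_k,B_k\in\mathbb Z_{\ge0}\).

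Next I factor, using the monomial identity already recorded before the statement:
\[
F=r^{-M}s^{-N}\sum_j \widetilde u_j\, r^{M-(pA_j+qB_j)}\,s^{N-(p'A_j+q'B_j)} .
\]
The exponents \(M-(pA_j+qB_j)\) and \(N-(p'A_j+q'B_j)\) are nonnegative integers. So each summand is a genuine polynomial monomial in \((r,s)\) multiplied by \(\widetilde u_j\), and each extends smoothly up to the resolved faces \(r=0\) and \(s=0\).

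For strict positivity I use the index \(k\) itself: its summand is exactly \(\widetilde u_k\), with exponents \(0,0\). All the other summands are nonnegative. Hence \(U\ge \widetilde u_k\ge c>0\) up to the faces, and in particular at the corner \(r=s=0\), where only the terms with \(\alpha_j\in Q\) survive.

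The main obstacle is smoothness of the residual coefficients \(\widetilde u_j\) themselves. The metric hypotheses give only bounded positive \(u_j\), so smoothness has to be imposed here, as the introduction signals. I would assume the \(u_j\) are smooth in the rooted coordinates \((X,Y,z)\) up to the boundary. The pullback \(u_j(r^ps^{p'},r^qs^{q'},z)\) is then smooth, since it is a composition with a polynomial map. The remaining issue is that the rooted chart \(x=X^D\) must not destroy smoothness. Lemma~\ref{lem:root} handles the change of defining function, and smoothness in \((x,y,z)\) implies smoothness in \((X,Y,z)\). Finally, regularity, \(|\det(v,w)|=1\), makes the chart a diffeomorphism onto its image away from the faces, so \(U\) is well defined on the whole cone chart.
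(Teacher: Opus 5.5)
Your proposal is correct and follows essentially the same route as the paper. You choose a vertex of \(Q\), which is one of the \(\alpha_j\); you apply the maximizing property to the two generators to define \(M\) and \(N\); you factor out \(r^{-M}s^{-N}\); and you use the term with residual exponent \((0,0)\) for strict positivity. Your explicit assumption that the coefficients \(u_j\) are smooth up to the boundary, as in item (vi) of the main theorem, spells out what the paper's proof uses tacitly.
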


\begin{proof}
Because \(\sigma\) is contained in a maximizing normal cone of \(P\), there is a face \(Q\subset P\).  Choose a vertex \(\alpha_*=(A_*,B_*)\) of \(Q\).  It maximizes the pairing with every vector in \(\sigma\).  In particular, for its primitive
generators \(v,w\),
\[
M=\langle v,\alpha_*\rangle=\max_j\langle v,\alpha_j\rangle,
\qquad
N=\langle w,\alpha_*\rangle=\max_j\langle w,\alpha_j\rangle.
\]
Factoring \(r^{-M}s^{-N}\) gives
\[
F=r^{-M}s^{-N}
\sum_j \widetilde u_j\,
r^{M-\langle v,\alpha_j\rangle}
s^{N-\langle w,\alpha_j\rangle}.
\]
All displayed residual exponents are nonnegative integers.  The
\(\alpha_*\)-term has residual exponent \((0,0)\), so its coefficient
extends as a strictly positive smooth function.  Every other term is a
smooth nonnegative monomial times a smooth positive coefficient.  Their
finite sum is therefore a smooth strictly positive unit up to the resolved
faces.
\end{proof}

In dimension two, a finite rational fan can be regularized by repeatedly
subdividing nonsmooth cones; this is the standard toric regularization step
(see, for example, \cite[\S2.6]{Fulton1993}).  After denominator clearing it
is the regular-fan step used here.  Once such a smooth refinement has
been chosen, Kottke--Melrose realize smooth refinements by generalized
boundary blow-up in the manifold-with-corners setting
\cite{KottkeMelrose2015}.  Kottke proves the corresponding
refinement-to-blow-up construction for generalized corners
\cite{Kottke2018}.  Joyce supplies the generalized-corner category used in
that framework \cite{Joyce2016}.

\section{Globalization}\label{sec:globalization}

Let \(X\) now be compact with embedded boundary faces.  In addition to
overlap invariance of the exponent sets, assume that after one common root
modification the local maximizing normal fans define refinements of the
boundary monoidal complex that admit a finite compatible global rational
refinement.  This is the exact compatibility condition needed by the
generalized blow-up construction.

\begin{proposition}[Invariance under defining-function changes]
\label{prop:defining}
If
\[
x'=e^\phi x,\qquad y'=e^\psi y,
\]
then
\[
u_jx^{-a_j}y^{-b_j}
=
u'_j(x')^{-a_j}(y')^{-b_j},
\qquad
u'_j=u_je^{a_j\phi+b_j\psi}.
\]
Thus the exponent vectors are unchanged.
\end{proposition}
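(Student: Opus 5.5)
The identity follows from direct substitution. Inverting the defining-function relations gives \(x=e^{-\phi}x'\) and \(y=e^{-\psi}y'\). Substituting into a single term and using the rules for real powers of positive quantities, we get
\[
x^{-a_j}=e^{a_j\phi}(x')^{-a_j},\qquad y^{-b_j}=e^{b_j\psi}(y')^{-b_j}.
\]
Both \(x,x'\) and \(y,y'\) are positive on \(U^\circ\), so these manipulations are legitimate for arbitrary real exponents \(a_j,b_j\ge0\). Multiplying by \(u_j\) yields the stated formula with \(u'_j=u_je^{a_j\phi+b_j\psi}\). Summing over \(j\) shows that \(F\) has the same form in the new defining functions, with the same exponent list \(\{(a_j,b_j)\}\).

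Next I would check that \(u'_j\) satisfies the hypotheses of a positive finite Newton interaction. Since \(\phi,\psi\) are smooth on the closed (relatively compact) neighbourhood, they are bounded there. Hence \(e^{a_j\phi+b_j\psi}\) is bounded above and below by positive constants, and the bounds \(0<c_j\le u_j\le C_j\) transfer to \(u'_j\) with modified constants. If the \(u_j\) are smooth up to the faces, so are the \(u'_j\), because the multiplier is smooth and positive. The exponent vectors, and therefore \(\cH\), \(H(m)\), \(A\), the accessibility fan and the Newton polygon \(P\), are all unchanged. So every criterion from the earlier sections is intrinsic to the boundary faces and does not depend on the choice of defining functions.

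There is no real obstacle here. The only point needing care is keeping the sign of the exponent consistent when inverting \(x'=e^\phi x\), and noting that boundedness of \(\phi,\psi\) is what keeps the coefficient class invariant. For the rooted setting, the same computation combined with Lemma~\ref{lem:root} shows that the integer exponents \((A_j,B_j)=D(a_j,b_j)\) are likewise invariant, because \(X'=e^{\phi/D}X\) contributes a multiplier \(e^{A_j\phi/D}=e^{a_j\phi}\).
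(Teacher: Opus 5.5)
Your proposal is correct and follows essentially the same route as the paper: substitute \(x=e^{-\phi}x'\), \(y=e^{-\psi}y'\), then use boundedness of the smooth \(\phi,\psi\) on a relatively compact neighbourhood to see that \(e^{a_j\phi+b_j\psi}\) is pinched between positive constants, so \(u'_j\) stays in the admissible coefficient class. Your extra check for the rooted exponents \((A_j,B_j)\), via \(e^{A_j\phi/D}=e^{a_j\phi}\), is also correct.
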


\begin{proof}
Substitute \(x=e^{-\phi}x'\) and \(y=e^{-\psi}y'\).  On a relatively
compact coordinate neighborhood, smoothness of \(\phi\) and \(\psi\)
implies that \(e^{a_j\phi+b_j\psi}\) is bounded above and below by
positive constants.  Hence \(u'_j\) satisfies the same two-sided
admissibility bounds as \(u_j\).
\end{proof}

\begin{theorem}[Conditional global rational realization]
\label{thm:global}
Assume \(X\) is compact with embedded boundary faces, the positive Newton
exponent sets are finite and rational, and the associated local maximizing
normal fans admit a finite compatible global rational refinement of the
rooted boundary monoidal complex.  This is a conditional local-to-global
realization statement: existence of the compatible global refinement is an
explicit hypothesis, not a consequence of local exponent invariance alone.
Then there is a common denominator
\(D\), a compatible rooted corner structure \(X^{(D)}\), and a finite
regular monoidal refinement
\[
\widetilde X\longrightarrow X^{(D)}
\]
such that every resolved codimension-two chart has the positive monomial
normal form of Theorem~\ref{thm:toric}.  The projective accessibility
criterion \eqref{eq:homogeneous-criterion}, corner criterion
\eqref{eq:cornercrit}, collapse statement, and local snowflake law are
independent of the chosen positive defining functions.
\end{theorem}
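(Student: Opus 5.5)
The proof splits into two parts. The first constructs the global rational realization. The second proves invariance of the metric statements. Throughout, the compatibility hypothesis is used as a black box.

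\emph{Step 1: common denominator and rooted structure.} Since \(X\) is compact, finitely many adapted charts \(U_\ell\) cover the codimension-two corners. Each chart carries finitely many rational exponents, so there is a common denominator \(D\) for all of them. Proposition~\ref{prop:defining} shows that the exponent sets do not depend on the choice of defining functions. Hence \(D\) is independent of the chart choices on overlaps. Lemma~\ref{lem:root} then glues the local rooted coordinates \(X=x^{1/D}\), \(Y=y^{1/D}\) into a compatible rooted corner structure \(X^{(D)}\). On overlaps the transition multipliers \(e^{\phi/D}\) are smooth and positive.

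\emph{Step 2: monoidal refinement.} After rooting, the exponents \(\alpha_j=(Da_j,Db_j)\) are integral. The transition multipliers only rescale the coefficients \(\widetilde u_j\), so the local Newton polygons \(P\) and maximizing normal fans \(\mathcal N^+(P)\) are intrinsic to each codimension-two face. By hypothesis, these local fans admit a finite compatible global rational refinement of the rooted boundary monoidal complex. Each cone of this refinement is two-dimensional and rational, so it can be regularized by the standard subdivision step \cite[\S2.6]{Fulton1993}. I would subdivide each cone intrinsically, for example by the minimal Hirzebruch--Jung subdivision, so that the regularization remains compatible across faces. Kottke--Melrose \cite{KottkeMelrose2015,Kottke2018} then realize the resulting regular refinement as a generalized boundary blow-up \(\widetilde X\to X^{(D)}\). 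Every regular cone lies in some maximizing normal cone, so Theorem~\ref{thm:toric} applies chart by chart and gives the positive monomial normal form. I expect this step to be the main obstacle. The point to check is that regularization preserves the compatibility assumed in the hypothesis. I would resolve it by using the canonical (minimal) regular subdivision of each two-dimensional cone, which depends only on the cone itself and not on the chart.

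\emph{Step 3: invariance of the metric conclusions.} Change the defining functions to \(x'=e^\phi x\), \(y'=e^\psi y\). By Proposition~\ref{prop:defining}, \(F\) is again a positive finite Newton interaction with the same exponents, and its coefficients satisfy new but still two-sided bounds. The background metric stays uniformly elliptic. The following quantities depend only on the exponent data:
\begin{itemize}
\item the support functions \(\cH\) and \(H\);
\item the diagonal order \(A\).
\end{itemize}
The results that use them require only positivity, the coefficient bounds, and ellipticity:
\begin{itemize}
\item Theorem~\ref{thm:direction}, giving the projective accessibility criterion;
\item Theorem~\ref{thm:corner}, giving the corner criterion;
\item Theorem~\ref{thm:canonical-corner-fibre} and Corollary~\ref{prop:collapse}, giving the collapse statements;
\item Theorem~\ref{thm:snowflake}, giving the snowflake law.
\end{itemize}
Hence these conclusions hold verbatim in the new coordinates. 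The completion \(\widehat{X^\circ}\) and the distance \(d_g\) do not depend on coordinates. The weighted sector \(y\asymp x^m\) is unchanged, because \(x'\asymp x\) and \(y'\asymp y\). The corner metrics \(d_C\) defined from different choices are bi-Lipschitz to one another. Therefore the criteria and the local snowflake law are independent of the chosen positive defining functions.
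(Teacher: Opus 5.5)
Your proposal is correct and follows essentially the same route as the paper. Compactness gives a common denominator, and Lemma~\ref{lem:root} with Proposition~\ref{prop:defining} produce the glued rooted structure. The hypothesized global refinement is regularized cone by cone and realized by Kottke--Melrose blow-up, with Theorem~\ref{thm:toric} supplying the normal form chart by chart. The metric conclusions are invariant because they depend only on exponent data and two-sided comparisons; the paper simply cites Lemma~\ref{lem:frozen} and quadratic-form comparison for this.

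Your extra worry about compatibility of the regularization is harmless, and in dimension two it is automatic. Regularizing a two-dimensional cone never subdivides its boundary rays, so any cone-by-cone choice glues, not only the minimal Hirzebruch--Jung one.
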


\begin{proof}
Compactness gives finitely many local exponent sets, hence one common
denominator.  Lemma~\ref{lem:root} glues the corresponding root structures,
and Proposition~\ref{prop:defining} shows that changes of positive defining
functions preserve both the exponent vectors and the admissible coefficient
class.

By hypothesis, the rooted local maximizing normal fans admit a finite
compatible global rational refinement of the boundary monoidal complex.
Regularize this refinement cone by cone in dimension two.  The resulting
smooth refinement is realized by generalized monoidal blow-up using the
published frameworks \cite{KottkeMelrose2015,Kottke2018}.  On each resolved
chart, Theorem~\ref{thm:toric} gives the required positive monomial normal
form.

Finally, the metric statements are independent of the chosen positive
defining functions by Lemma~\ref{lem:frozen}, and independent of the
particular smooth uniformly elliptic background representative by
quadratic-form comparison.
\end{proof}

\begin{remark}[Local invariance versus global fan compatibility]
Proposition~\ref{prop:defining} proves that the exponent data are invariant
under positive changes of defining functions.  It does not by itself imply
that arbitrarily chosen local subdivisions glue to a global monoidal
refinement.  The compatibility hypothesis in Theorem~\ref{thm:global}
records precisely this additional global requirement.
\end{remark}

\section{Sharpness and scope of the hypotheses}\label{sec:sharpness}

The theorem package contains assumptions serving different purposes.  This
section records which are genuinely metric and which belong only to the
chosen smooth/global realization.

\begin{proposition}[Cancellation]\label{prop:cancellation}
The raw exponent-support law can fail for signed leading terms even when
\(F\) remains strictly positive.
\end{proposition}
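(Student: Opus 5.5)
The statement is existential, so I will prove it with one explicit counterexample. The example should be a strictly positive \(F\) written as a signed finite sum of monomials \(x^{-a_j}y^{-b_j}\), for which the ``raw'' support function \(\cH\), computed from all exponent vectors appearing with nonzero coefficient, predicts the wrong accessibility behaviour. The natural mechanism is cancellation of the leading monomials along a weighted direction. I would take
\[
F=(x^{-1}-y^{-1})^2+1=x^{-2}-2x^{-1}y^{-1}+y^{-2}+1,
\]
which is strictly positive on \(U^\circ\). Its raw exponent set is \(\{(2,0),(1,1),(0,2),(0,0)\}\), so the raw diagonal order is \(A=\max_j(a_j+b_j)=2\).

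The first step records what the raw law predicts. By Theorem~\ref{thm:corner}, read formally with these exponents, the corner would be inaccessible. By Theorem~\ref{thm:direction} at \(m=1\), the diagonal direction would be inaccessible too, since \(H(1)=2\not<2\).

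The second step shows the prediction is false. Along the diagonal \(x=y=r\), \(z\) fixed, the leading terms cancel exactly: \(F(r,r,z)=1\). The diagonal segment therefore has finite \(g\)-length, comparable to its Euclidean length by uniform ellipticity of \(g_0\). So the corner is at finite distance, and the direction \(m=1\) is directionally accessible with \(s\equiv1\). This contradicts both raw criteria.

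The third step, which is the main point to articulate, is to locate exactly where the proofs break. The proof of Proposition~\ref{prop:power} uses positivity of the maximizing term: the lower bound \(F\ge c\,x^{-a_{j_*}}y^{-b_{j_*}}\) fails once other coefficients may be negative. Likewise, the radial barrier in Theorem~\ref{thm:corner} needs \(F\) to dominate a single top-order monomial, which is false here.

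Optionally, I would add a robustness remark: the failure is not an artifact of exact cancellation on a measure-zero set. In the weighted chart \(y=rs\) one has \(F=r^{-2}(1-s^{-1})^2+1\), which stays bounded only along the curve \(s=1\). This shows accessibility is genuinely governed by cancellation geometry, not by the raw support. The only care needed is the bookkeeping convention that the coefficients are nonzero constants of mixed sign, so the example lies just outside the class of positive finite Newton interactions.
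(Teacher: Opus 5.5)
Your proof is correct and uses exactly the paper's counterexample \(F=(x^{-1}-y^{-1})^2+1\). The paper only observes that the raw diagonal order is \(2\) while \(F(r,r)=1\); your extra step turning this into a finite diagonal length, so that the raw corner criterion \(A<2\) gives the wrong answer, is a welcome sharpening, but your optional robustness remark is self-contradictory: the region where \(F\) stays bounded is the thin horn \(|x-y|\lesssim xy\) (i.e.\ \(|s-1|\lesssim r\)), so the cancellation \emph{is} concentrated on a degenerate set of directions, which is harmless since a single finite-length curve is all the statement needs.
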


\begin{proof}
Take
\[
F=(x^{-1}-y^{-1})^2+1
=x^{-2}+y^{-2}-2x^{-1}y^{-1}+1.
\]
The unsigned raw support gives order \(2\) on the diagonal, but
\(F(r,r)=1\).  Thus cancellation lowers the true singular order.
\end{proof}

\begin{proposition}[Coefficient degeneration changes the effective order]
If the lower or upper positive bound on a nominal unit is removed, the
effective exponent may change.
\end{proposition}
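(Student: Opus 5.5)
The statement is existential and only needs an explicit example. I will build one from a single monomial interaction, where the effective order is fully computable, and let one nominal unit degenerate. Concretely, take \(F=u(x,y)\,x^{-a}y^{-b}\) with nominal exponents \(a,b\ge0\), and choose a coefficient that is positive on \(U^\circ\) but is itself a monomial: \(u=x^{\alpha}y^{\beta}\) with \(\alpha,\beta\) not both zero. This \(u\) violates the two-sided bounds \(c_j\le u_j\le C_j\) of the definition of a positive finite Newton interaction. If \(\alpha,\beta>0\) the lower bound fails; if \(\alpha,\beta<0\) the upper bound fails. Then \(F=x^{-(a-\alpha)}y^{-(b-\beta)}\) exactly, so the true exponent vector is \((a-\alpha,b-\beta)\) rather than the nominal \((a,b)\).

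The key step is to show that this shift changes a metric invariant, not merely the formal exponent. I will use the diagonal order as the test quantity. By Theorem~\ref{thm:corner} applied to the honest data \((a-\alpha,b-\beta)\), assuming these are nonnegative, the corner is accessible if and only if \(a+b-\alpha-\beta<2\). The nominal data predict accessibility if and only if \(a+b<2\).

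I will then fix specific numbers on each side.

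\begin{itemize}
\item \textbf{Lower bound removed.} Take \(a=b=1\), so the nominal \(A=2\) predicts inaccessibility. Take \(\alpha=\beta=1/2\), so the true \(A=1<2\) and the corner is at finite distance. Directly, the diagonal path has length \(\int_0^\varepsilon r^{-1/2}\,dr<\infty\).
\item \textbf{Upper bound removed.} Take \(a=b=1/2\), so the nominal \(A=1\) predicts accessibility. Take \(\alpha=\beta=-1/2\), so the true \(A=2\). The radial barrier \(\int\rho^{-1}|d\rho|=\infty\) from the proof of Theorem~\ref{thm:corner} gives infinite distance.
\end{itemize}

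Because the effective \(A\) also enters Theorem~\ref{thm:snowflake} and Corollary~\ref{cor:hd}, the snowflake exponent and the Hausdorff dimension change as well. I will record this as a remark.

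The only point needing care is to make sure each example really lies outside the admissible class, and not merely in it with rewritten exponents. The statement speaks of a fixed nominal decomposition, so I will phrase it as follows. For the given nominal exponent vector, the unit \(u\) fails exactly one of the two bounds. Meanwhile \(F\) remains positive and \(g=Fg_0\) remains a smooth conformal metric on the interior. I will also verify nonnegativity of the shifted exponents, so that Theorem~\ref{thm:corner} and its barrier argument apply verbatim to the rewritten form.

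I expect no real obstacle. The mildest subtlety is in the degenerate case: the completion there must be read off from the true exponents. The argument of Lemma~\ref{lem:frozen} then legitimately applies with \(u\equiv1\) after absorbing the monomial.
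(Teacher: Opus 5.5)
Your proposal is correct and uses the same mechanism as the paper: a monomial coefficient is absorbed into the nominal monomial, so that a vanishing unit \(u=x\) turns \(x^{-2}\) into \(x^{-1}\) and an unbounded unit \(u=x^{-1}\) turns \(x^{-1}\) into \(x^{-2}\). Your two-variable version with \(u=x^{\alpha}y^{\beta}\) goes a little further by checking that the diagonal order \(A\) actually crosses the threshold \(2\) in each direction, so the change is metrically visible (corner accessibility flips) rather than merely formal; the only caveat is that in your examples one side is inaccessible, so for the snowflake remark you would want examples with both orders below \(2\).
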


\begin{proof}
A vanishing coefficient gives
\[
x\cdot x^{-2}=x^{-1},
\]
whereas an unbounded coefficient gives
\[
x^{-1}\cdot x^{-1}=x^{-2}.
\]
\end{proof}

\begin{proposition}[Rationality is geometric, not metric]
Theorem~\ref{thm:direction}, Theorem~\ref{thm:corner}, and
Theorem~\ref{thm:snowflake} require no rationality of the exponents.
A common denominator \(D\) exists for a finite exponent set if and only if
all exponents are rational.
\end{proposition}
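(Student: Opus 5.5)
The statement has two independent parts, and I will treat them separately.

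\emph{Part one: no rationality is used in the metric theorems.} I will go back through the proofs of Theorem~\ref{thm:direction}, Theorem~\ref{thm:corner} and Theorem~\ref{thm:snowflake}. For each one I will list the ingredients it actually uses. These are:
\begin{itemize}
\item the frozen comparison of Lemma~\ref{lem:frozen};
\item the power-law support of Proposition~\ref{prop:power};
\item the exact coframe identity \eqref{eq:exact-coframe};
\item the level-crossing lower bound and the radial barrier $x^{-a_{j_*}}y^{-b_{j_*}}\ge\rho^{-A}$;
\item explicit integrals of the form $\int_0^\varepsilon r^{-\beta}\,dr$;
\item the localization Lemma~\ref{lem:exit}.
\end{itemize}
Each of these needs only $a_j,b_j\in\R_{\ge0}$, finiteness of $N$, and the two-sided coefficient bounds. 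Real powers $r^{-\beta}$ make sense for $r>0$. The integral $\int_0^\varepsilon r^{-\beta}\,dr$ converges exactly when $\beta<1$, with no condition on $\beta$ being rational. The maximum defining $H(m)$ is a maximum over finitely many real affine functions, so it is attained. The root substitution \eqref{eq:root} and Theorem~\ref{thm:toric} appear nowhere in these proofs. So the first claim follows by inspection, and the write-up will simply point out where each exponent enters.

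\emph{Part two: a common denominator exists if and only if all exponents are rational.} Suppose there is a $D\in\mathbb N$ with $Da_j,Db_j\in\mathbb Z$ for every $j$. Then $a_j=(Da_j)/D\in\Q$, and likewise for $b_j$. Conversely, suppose every exponent is rational. Write each one in lowest terms and let $D$ be the least common multiple of the finitely many denominators. Finiteness of the exponent set, which comes from $N<\infty$, is what makes this least common multiple exist. Multiplying each exponent by $D$ then gives an integer. If we also want $A_j,B_j\ge0$ for the rooted chart, this holds automatically because $D>0$.

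\emph{Main difficulty.} The only delicate point is making the "by inspection" claim honest. The one place an integrality assumption could hide is in the phrase "the divergence is logarithmic at equality" and in the fan endpoints of Theorem~\ref{thm:fan}. I will check that both rely only on equalities of real affine functions and not on any integer structure. I will also add a remark that irrational exponents, such as $F=x^{-\sqrt2/2}$, still fall under the metric theorems with $A=\sqrt2/2$, even though no root chart resolves them.
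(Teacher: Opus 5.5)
Your proposal is correct and follows essentially the same route as the paper. The paper disposes of the first claim by noting that the metric proofs use only real powers on the open quadrant and two-sided comparisons, and it proves the second claim exactly as you do, via $a=(Da)/D$ in one direction and the least common multiple of the finitely many denominators in the other; your itemized inspection is a more detailed version of that one-line justification.
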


\begin{proof}
The metric proofs use only real powers on the open quadrant and two-sided
comparisons.  For the denominator statement, if \(Da\in\mathbb Z\), then
\(a=(Da)/D\in\mathbb Q\); conversely the least common multiple of finitely
many rational denominators clears all of them.
\end{proof}

\begin{proposition}[Nonnegativity and the diagonal minimum]
\label{prop:diagmin}
If \(a_j,b_j\ge0\), then
\begin{equation}\label{eq:diagmin}
\inf_{m>0}\frac{H(m)}{\min\{1,m\}}=H(1)=A.
\end{equation}
The conclusion can fail if negative exponents are allowed.
\end{proposition}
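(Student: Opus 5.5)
The plan is to prove the identity by splitting the range of \(m\) at the normalization point \(m=1\) and using monotonicity of \(H(m)/\min\{1,m\}\) on each side. Nonnegativity of the exponents enters in exactly two places, and a small explicit example will then show the failure when negative exponents are allowed.

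First take \(m\ge1\). Then \(\min\{1,m\}=1\), and the quotient is just \(H(m)=\max_j(a_j+mb_j)\). Since every \(b_j\ge0\), each affine function \(m\mapsto a_j+mb_j\) is nondecreasing, and so is their maximum. Hence \(H(m)\ge H(1)=A\) for all \(m\ge1\).

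Next take \(0<m<1\). Here the quotient is \(H(m)/m=\max_j(a_j/m+b_j)\). Since every \(a_j\ge0\), each term \(a_j/m+b_j\) is nonincreasing in \(m\). So on \((0,1]\) the quotient is nonincreasing, and its infimum over that interval is attained at \(m=1\), where it equals \(A\). Equivalently, one can use the homogeneous form \(\cH(p,q)/\min\{p,q\}\) and note that \(\cH\) is coordinatewise nondecreasing when the exponents are nonnegative. Combining the two ranges shows that the infimum equals \(H(1)=A\) and is attained at the diagonal, which proves \eqref{eq:diagmin}.

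For the failure statement, I would give a single-term example with a negative exponent, say \(a_1=3\) and \(b_1=-1\). Then \(A=2\), while for large \(m\) we get \(H(m)=3-m\), which is eventually negative. So the infimum is strictly below \(A\), indeed equal to \(-\infty\). This one example suffices. There is no real obstacle in the argument: the only care needed is to check that each monotonicity step genuinely uses the sign hypothesis, which is exactly what the counterexample confirms.
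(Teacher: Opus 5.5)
Your proof is correct and follows the paper's argument essentially verbatim: you split at \(m=1\), use \(b_j\ge0\) to make \(H\) nondecreasing on \([1,\infty)\), and use \(a_j\ge0\) to get \(H(m)/m=\max_j(a_j/m+b_j)\ge H(1)\) on \((0,1]\), with equality attained at the diagonal. Your counterexample \((a_1,b_1)=(3,-1)\) plays the same role as the paper's \((2,-1)\), where \(H(m)=2-m\) decreases for \(m\ge1\), so the infimum drops below \(H(1)\).
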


\begin{proof}
For \(m\ge1\), each \(a_j+mb_j\) is nondecreasing, so
\(H(m)\ge H(1)\).  For \(0<m\le1\),
\[
\frac{H(m)}m
=
\max_j\left(\frac{a_j}{m}+b_j\right)
\ge
\max_j(a_j+b_j)
=
H(1).
\]
Equality is attained at \(m=1\).  If negative exponents are allowed, take
\((a,b)=(2,-1)\): then \(H(m)=2-m\) decreases for \(m\ge1\).
\end{proof}

\begin{proposition}[Background nondegeneracy is essential for invariance]
If the background tensor is allowed to degenerate at the corner, the
accessibility threshold can change.
\end{proposition}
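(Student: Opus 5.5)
This statement is an existence claim, so I will prove it with an explicit counterexample. I take a smooth conformal factor with no singularity and let the background tensor degenerate at the corner. Concretely, on \(U=[0,\varepsilon)^2\times V\) I take \(F\equiv 1\), which is the Newton interaction with the single exponent \((a,b)=(0,0)\), so \(A=0\). For the background I take the degenerate tensor
\[
 g_0'=\rho^{-2}(dx^2+dy^2)+g_V,\qquad \rho=(x^2+y^2)^{1/2}.
\]
Here \(g_V\) is a smooth metric on \(V\). This tensor is smooth and positive definite on \(U^\circ\), but it is not uniformly elliptic up to \(\{x=y=0\}\): comparability with \(g_{\rm E}\) fails as \(\rho\to0\). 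Equivalently, one could put a vanishing factor \(\rho^{2}\) in front of \(g_{\rm E}\) and read the statement as raising the threshold rather than lowering it. I will do the degenerating-up version, because it makes the corner inaccessible despite \(A=0<2\).

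The key steps run in the following order.
\begin{enumerate}[label=(\arabic*)]
\item \textbf{Accessibility predicted by the criterion.} With \(g_0=g_{\rm E}\), Theorem~\ref{thm:corner} says the corner is accessible, since \(A=0<2\). The diagonal path has finite length \(\int_0^\varepsilon\sqrt2\,dr\).
\item \textbf{Inaccessibility for the degenerate background.} With \(g=Fg_0'\), I reuse the radial barrier from the proof of Theorem~\ref{thm:corner}. Since \(|d\rho|\le(dx^2+dy^2)^{1/2}\), every curve approaching the corner has
\[
 L_g(\gamma)\ge\int \rho^{-1}|d\rho|\ge\int_0^{\rho_*}\frac{d\rho}{\rho}=\infty.
\]
The level-crossing argument from Theorem~\ref{thm:direction} handles curves along which \(\rho\) is non-monotone.
\item \textbf{Conclusion.} The exponent data are identical in the two cases, yet the accessibility threshold changes. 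The same construction with \(\rho^{-\beta}\) in place of \(\rho^{-2}\) shifts the threshold to \(A+\beta<2\).
\end{enumerate}

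To show that the threshold value itself moves, and not only its outcome at one point, I will run the same computation for a general single exponent. Take \(F=x^{-a}y^{-b}\) and the background \(\rho^{-\beta}g_{\rm E}\). The computation in Theorem~\ref{thm:corner} then goes through verbatim with \(A\) replaced by \(A+\beta\). This follows from the two-sided bound \(x^{-a}y^{-b}\asymp\rho^{-A}\) on the diagonal, together with the lower bound \(x^{-a}y^{-b}\ge\rho^{-A}\), which holds everywhere.

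The only delicate point is interpretive rather than technical. I must make sure the degenerate tensor is still an admissible ``background'' in the intended sense, namely smooth on the interior and tangentially nondegenerate, while failing exactly the uniform ellipticity used in Lemma~\ref{lem:frozen}. I also need to note which step of the earlier proofs breaks: the comparison \(c_0g_{\rm E}\le g_0\) is what allows the Newton data alone to govern the threshold, and it is precisely this comparison that fails here.
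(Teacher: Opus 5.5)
Your argument is correct, but it runs in the opposite direction from the paper's. The paper keeps a genuinely singular factor, \(F=x^{-2}+y^{-2}\) with \(A=2\), which is inaccessible for a Euclidean background. It then lets the background vanish, \(g_0=x^2dx^2+y^2dy^2\). On the diagonal this gives \(Fg_0=4\,dr^2\), so one explicit finite-length curve shows the corner becomes accessible.

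You instead take \(F\equiv1\), so \(A=0\) and the corner is accessible, and a background blowing up like \(\rho^{-2}\). To show the corner becomes inaccessible you therefore need a barrier over \emph{all} approach curves, and you import that barrier correctly from Theorem~\ref{thm:corner}.

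What the paper's route buys is literal fidelity to the word ``degenerate'': its background loses rank, and the check is one line. Your primary example instead violates the upper half of uniform ellipticity, so its fit to the statement is interpretive, as you acknowledge. Also, your vanishing \(\rho^2\) variant is an alternative example, not an ``equivalent'' one.

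What your route buys is the \(\rho^{-\beta}g_{\rm E}\) family. The two estimates you use are valid for every real \(\beta\): \(F\ge c\rho^{-A}\) everywhere, and \(F\asymp\rho^{-A}\) on the diagonal. So the threshold becomes exactly \(A+\beta<2\). Taking \(\beta<0\) gives a genuinely degenerate paper-type example, for instance \(\beta=-2\) with \(F=x^{-2}+y^{-2}\), which is then accessible. This background is singular only on \(\{x=y=0\}\), whereas \(x^2dx^2+y^2dy^2\) degenerates along both boundary faces. The family also quantifies how far the threshold moves, which the paper's single example does not.

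For the statement in its literal reading, lead with a \(\beta<0\) instance rather than \(\beta=2\).
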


\begin{proof}
Let
\[
F=x^{-2}+y^{-2}.
\]
With Euclidean background, the diagonal has logarithmically infinite
length.  With
\[
g_0=x^2dx^2+y^2dy^2,
\]
the restriction to \(x=y=r\) satisfies
\[
Fg_0=4\,dr^2,
\]
so the same diagonal has finite length.
\end{proof}

\begin{remark}[Compactness, embeddedness, and codimension]
Compactness is a sufficient finite-type condition for the global theorem,
not a local metric hypothesis.  Embedded boundary faces belong to the
global monoidal framework used here, not to the local length calculation.
Codimension two is a scope condition of the present proof: in higher
codimension the projective direction space and fan are genuinely
higher-dimensional, and the one-dimensional seam ordering used here does
not constitute a proof.
\end{remark}

\section{Main theorem}\label{sec:main-theorem}

\begin{theorem}[Metric completion for positive finite Newton interactions]
\label{thm:main}
Let \(C\) be a codimension-two corner and suppose locally
\[
g=
\left(
\sum_{j=1}^N u_jx^{-a_j}y^{-b_j}
\right)g_0,
\]
where \(N<\infty\), \(a_j,b_j\in\R_{\ge0}\),
\(0<c_j\le u_j\le C_j\), and \(g_0\) is uniformly comparable to a smooth
Euclidean coordinate metric.  Define
\[
\cH(p,q)=\max_j(pa_j+qb_j),
\qquad
A=\cH(1,1).
\]
Then:
\begin{enumerate}[label=(\roman*)]
\item a positive weighted direction \([(p,q)]\) is accessible exactly when
\[
\cH(p,q)<2\min\{p,q\};
\]
\item the original corner is accessible exactly when \(A<2\);
\item if \(A<2\), every normal approach over a fixed tangential label
converges to a unique canonical completion point, and the resulting map
\(\iota_C:C\to\widehat{U^\circ}\) is injective;
\item if \(A<2\), then locally
\[
d_{\partial}\asymp d_C^{\,1-A/2};
\]
\item if the smooth corner stratum has dimension \(k\), then
\[
\dim_H(C,d_{\partial})=\frac{k}{1-A/2};
\]
\item if, in addition, the coefficients are smooth and the exponents are
rational, then locally, after denominator clearing and a finite regular
subdivision of the maximizing normal fan, the interaction admits smooth
positive monomial normal forms on resolved charts.  A global finite monoidal
realization follows under the additional compatibility hypothesis of
Theorem~\ref{thm:global}.
\end{enumerate}
\end{theorem}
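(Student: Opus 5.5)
The proof assembles the earlier results item by item; the only new work is checking that the hypotheses of Theorem~\ref{thm:main} match those used earlier. The background assumption is weaker than smoothness: \(g_0\) is only uniformly comparable to a Euclidean coordinate metric. So the first step is to apply Lemma~\ref{lem:frozen}. This sandwiches \(g\) between \(c_0F_-g_{\rm E}\) and \(C_0F_+g_{\rm E}\), and all length estimates then reduce to frozen positive Newton sums with Euclidean background. Every earlier metric argument uses only two-sided quadratic-form comparison, positivity and boundedness of the \(u_j\), and nonnegativity of the exponents. The tangential-leg estimates in Lemma~\ref{lem:uniform-normal-reduction} and Theorem~\ref{thm:snowflake} compare to a smooth \(d_C\), and they survive replacing \(g_0|_{TC}\) by any uniformly comparable smooth metric, at the cost of constants.

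With this reduction, the items follow in order.
\begin{enumerate}[label=(\roman*)]
\item This is Theorem~\ref{thm:direction} in homogeneous form \eqref{eq:homogeneous-criterion}, using \(\cH(p,q)=pH(q/p)\).
\item This is Theorem~\ref{thm:corner}. Proposition~\ref{prop:diagmin} can be cited to explain why the diagonal value \(A=\cH(1,1)\) is the relevant one.
\item Convergence of every normal approach with fixed label \(z\) to \(\iota_C(z)\) is Theorem~\ref{thm:canonical-corner-fibre}, resting on Lemma~\ref{lem:uniform-normal-reduction}. If ``normal approach'' is read as any Cauchy sequence with \(x_n,y_n\to0\), Theorem~\ref{thm:corner-cauchy-classification} covers it. Injectivity is Corollary~\ref{cor:canonical-corner-embedding}.
\item This is Theorem~\ref{thm:snowflake}.
\item This is Corollary~\ref{cor:hd}.
\item The local part combines root clearing (Lemma~\ref{lem:root}), the existence of a regular refinement of \(\mathcal N^+(P)\) in dimension two, and Theorem~\ref{thm:toric}. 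Smoothness of the \(u_j\) is what makes the residual unit \(U\) smooth. The global part is exactly Theorem~\ref{thm:global} under its stated compatibility hypothesis.
\end{enumerate}

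The main point requiring care is locality in (iii)--(iv). Estimates proved in the chart must compute the ambient completion distance \(\widehat d_g\), so paths leaving \(U\) must not give shortcuts. I would handle this with Lemma~\ref{lem:exit}, choosing \(W\Subset U\) and restricting to labels \(z_0,z_1\) so close that \(C\delta^{1-A/2}<\eta(W,U)\); any competing curve leaving \(U\) is then longer than the upper bound already obtained. A second subtlety is injectivity of \(\iota_C\) on all of \(C\), not just locally. For distinct labels that are far apart, the Euclidean lower bound \(g\ge c\,g_{\rm E}\), combined with Lemma~\ref{lem:exit}, gives a positive separation. So after shrinking \(C\) to a connected relatively compact patch, as in Corollary~\ref{cor:canonical-corner-embedding}, global injectivity follows. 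I would state (iii)--(v) explicitly as local conclusions on such a patch.
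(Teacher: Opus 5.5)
Your proposal is correct and follows the paper's proof. The paper likewise obtains (i)--(v) by citing Theorem~\ref{thm:direction}, Theorem~\ref{thm:corner}, Theorem~\ref{thm:canonical-corner-fibre} with Corollary~\ref{cor:canonical-corner-embedding} (plus Theorem~\ref{thm:corner-cauchy-classification} for Cauchy sequences), Theorem~\ref{thm:snowflake} and Corollary~\ref{cor:hd}, and it gets (vi) from Theorem~\ref{thm:toric}, two-dimensional fan regularization and Theorem~\ref{thm:global}. Your extra bookkeeping is sound: the reduction via Lemma~\ref{lem:frozen}, the locality argument via Lemma~\ref{lem:exit}, and the Euclidean lower bound $g\ge c\,g_{\rm E}$, which gives injectivity of $\iota_C$ on a whole patch rather than only for nearby labels. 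That last point tightens the paper's somewhat informal ``after shrinking the patch'' step in Corollary~\ref{cor:canonical-corner-embedding}.
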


\begin{proof}
Items (i)--(v) are respectively
Theorem~\ref{thm:direction}, Theorem~\ref{thm:corner},
Theorem~\ref{thm:canonical-corner-fibre} together with
Corollary~\ref{cor:canonical-corner-embedding},
Theorem~\ref{thm:snowflake}, and Corollary~\ref{cor:hd}.
Theorem~\ref{thm:corner-cauchy-classification} gives the corresponding
local Cauchy-sequence classification.  Item (vi) follows locally from Theorem~\ref{thm:toric} together with
regularization of the rational fan.  The corresponding global statement is
Theorem~\ref{thm:global}, under its additional compatibility hypothesis.
\end{proof}

\section{Relation to existing work}\label{sec:related-work}

The following distinction is important for the scope of the claims.

\paragraph{Established resolution theory.}
Kottke--Melrose construct generalized boundary blow-ups from refinements of
the basic monoidal complex of a manifold with corners
\cite{KottkeMelrose2015}.  Kottke proves the analogous
refinement-to-blow-up statement for manifolds with generalized corners
\cite{Kottke2018}, and Joyce develops the generalized-corner category used
in that setting \cite{Joyce2016}.  For discrete quasihomogeneous weights,
Behr gives a detailed invariant treatment of quasihomogeneous blow-ups of
\(p\)-submanifolds in terms of filtrations of the vanishing ideal
\cite{Behr2021}.  These works justify or contextualize the geometric
realization of weighted refinements; they are not cited as sources for the
metric accessibility or boundary-snowflake theorems proved here.

\paragraph{Established Newton methodology.}
Newton polyhedra are classical devices for organizing competing monomial
orders in singular asymptotic problems; Varchenko's oscillatory-integral
analysis is a standard example \cite{Varchenko1976}.  Newton-polyhedral
filtrations have also been used to construct anisotropic singular Riemannian
metrics in singularity theory, notably in the study of bi-Lipschitz and
differentiable sufficiency of jets \cite{SoaresCostaSaia2026}.  That use of
Newton data is directed toward finite determinacy and controlled vector-field
estimates; the present use is different, employing Newton support data to
determine intrinsic accessibility, completion fibres, and boundary metric
geometry.  The present paper therefore uses established Newton methodology
without attributing the metric-completion criterion itself to that literature.

\paragraph{Established singular and metric geometry.}
Cheeger's singular Riemannian spaces \cite{Cheeger1983}, Mazzeo's edge
calculus \cite{Mazzeo1991}, and Melrose's analysis on manifolds with corners
\cite{Melrose1992} form part of the wider geometric-analytic background.
Romney's conformal Grushin spaces \cite{Romney2016} and the recent singular
boundary metrics studied by Colin de Verdi\`ere, Dietze, de Hoop, and
Tr\'elat \cite{ColinDeVerdiereDietzeDeHoopTrelat2026} are closer in their
use of singular conformal length structures.  Navarro--Pan's cone--Grushin
examples provide another nearby singular-metric phenomenon, exhibiting
large Hausdorff dimension and inhomogeneous metric dilations
\cite{NavarroPan2026}.  General conformal-density work studies
metric boundaries, boundary accessibility, and Hausdorff or packing
dimensions for conformally deformed Euclidean domains; see Bonk--Koskela
\cite{BonkKoskela2002}, Nieminen \cite{Nieminen2009}, and
Kl\'en--Suomala \cite{KlenSuomala2013}.  The standard metric-space facts used for
bi-Lipschitz comparison, snowflaking, and Hausdorff dimension are taken from
Burago--Burago--Ivanov and Heinonen \cite{BBI2001,Heinonen2001}.

These works show that singular conformal and Grushin-type metrics can
produce nonclassical completion strata, boundary-accessibility effects, and
anomalous Hausdorff dimensions.  The present novelty claim is therefore not
that such phenomena occur in isolation.  Rather, for a finite positive sum
of inverse monomials at a codimension-two corner, the Newton support data
give the exact projective criterion
\(\cH(p,q)<2\min\{p,q\}\), while the diagonal support value
\(A=\cH(1,1)\) simultaneously controls ambient corner accessibility,
normal-fibre collapse, and the exact boundary snowflake exponent.  The
result is a structured classification linking Newton support geometry to
the intrinsic metric completion.

\paragraph{Relation to the author's earlier work.}
The author's related preprint \cite{ZanalAbidinPaper1} studies the
single-interaction and orthogonal-support special cases.  No result from that
manuscript is used as an input here: all metric estimates and completion
arguments are proved independently.  The present contribution is the uniform
classification for arbitrary finite positive Newton supports and the resulting
projective fan and completion geometry.

\paragraph{Claim made here.}
Within the positive finite Newton class \eqref{eq:model}, the paper proves
directly that the homogeneous support function controls resolved
finite-distance accessibility through
\[
\cH(p,q)<2\min\{p,q\},
\]
and that the diagonal order \(A=\cH(1,1)\) controls the intrinsic
completion metric through
\[
d_{\partial}\asymp d_C^{\,1-A/2}.
\]
The mixed two-monomial calculation further shows that the scalar Newton
order must be combined with the differential of the weighted blow-down.
The cited resolution results do not state the metric accessibility or
boundary-snowflake conclusions proved here.  The contribution of the present
paper is therefore the metric-completion theorem attached to positive finite
Newton data; no absolute priority claim beyond the stated comparison is
made.

\paragraph{Excluded claims.}
The paper does not claim a new general resolution-of-singularities theorem,
does not claim a higher-codimension result, and does not claim a
monomialization theorem for arbitrary smooth positive interactions.
Irrational exponents are covered by the metric estimates but not by the
integral rooted/monoidal realization proved here.

\section{Discussion and limitations}\label{sec:limitations}

The finite positive Newton class is broad enough to contain mixed sums of
product singularities while retaining exact control of the dominant
orders.  Positivity is doing real work: it converts the exponent set into a
support function without a separate analysis of zeros of face
polynomials.  Signed interactions should therefore be regarded as a
different problem, requiring nondegeneracy hypotheses on leading face
functions.

The rationality restriction belongs only to the integral geometric
realization.  Discrete quasihomogeneous blow-ups admit an invariant
filtration-based formulation \cite{Behr2021}, while irrational weighted
directions are already covered directly by the metric criterion proved
here.  We do not claim an analogous global integral blow-up realization for
arbitrary irrational exponent data.

The codimension-two assumption is similarly a scope boundary rather than a
negative result.  In higher codimension one expects a support function on
the positive projective cone, but the compatibility and completion
stratification require new arguments.

Finally, no claim is made here for arbitrary smooth positive singular
functions.  The theorem concerns finite positive Newton support, or
functions uniformly comparable to such a support.  Extending existence of
finite resolution to a substantially broader asymptotic class is a separate
problem.

\section{Concluding remarks}\label{sec:conclusion}

Positive finite Newton data provide a direct bridge from resolution
geometry to metric completion.  The support function
\[
\cH(p,q)=\max_j(pa_j+qb_j)
\]
simultaneously records dominance and the sharp finite-distance condition
\[
\cH(p,q)<2\min\{p,q\}.
\]
At the original corner this reduces to
\[
\max_j(a_j+b_j)<2,
\]
while the same diagonal order controls the snowflaked boundary metric and
its Hausdorff dimension.  The mixed benchmark shows why the scalar
coefficient and the blow-down differential must be resolved together.
For rational data, rooted denominator clearing followed by established
monoidal refinement supplies a finite smooth geometric realization.  The
separation between metric, smooth-resolution, and global rational
hypotheses makes the resulting theorem both sharper and more transparent.
Within the positive finite Newton class, the homogeneous Newton support
function is therefore the organizing metric invariant governing
accessibility, completion geometry, and boundary structure.

\end{document}